\documentclass[12pt]{amsart} 
\usepackage{amssymb,amsmath,amsthm,amscd}
\usepackage[all]{xy}
\usepackage{amsfonts}
\usepackage{float}
\usepackage[shortlabels]{enumitem}

\addtolength{\textwidth}{100pt}
\addtolength{\textheight}{0.25in}
\addtolength{\hoffset}{-57pt}
\addtolength{\voffset}{0pt}

\usepackage{graphicx}

\usepackage{hyperref}

\numberwithin{equation}{section}

\newtheoremstyle{fancy1}{10pt}{10pt}{\itshape}{12pt}{\textsc\bgroup}{.\egroup}{8pt}{ }
\newtheoremstyle{fancy2}{10pt}{10pt}{}{12pt}{\itshape}{.}{8pt}{ }

\theoremstyle{fancy1}

\newtheorem{cor}[equation]{corollary}
\newtheorem{lem}[equation]{lemma}
\newtheorem{prop}[equation]{Proposition}
\newtheorem{theorem}[equation]{Theorem}

\newtheorem{main}{Theorem}

\newtheorem*{main*}{Theorem}
\newtheorem*{conj*}{Conjecture}
\newtheorem*{cor*}{corollary}

\newtheorem*{question*}{Question}

\newtheorem*{problem*}{Problem}

\setcounter{table}{\value{equation}}

\theoremstyle{fancy2}
\newtheorem{definition}[equation]{Definition}
\newtheorem{rem}[equation]{Remark}
\newtheorem*{rem*}{Remark}

\newtheorem*{rems*}{Remarks}

\newcommand{\cref}[1]{Corollary~\ref{#1}}

\newcommand{\lref}[1]{Lemma~\ref{#1}}
\newcommand{\pref}[1]{Proposition~\ref{#1}}
\newcommand{\rref}[1]{Remark~\ref{#1}}
\newcommand{\tref}[1]{Theorem~\ref{#1}}
\newcommand{\sref}[1]{Section~\ref{#1}}

\newcommand{\e}{\epsilon}

\newcommand{\ggm}{geometric graph manifold\ }

\newcommand{\ggms}{geometric graph manifolds\ }

\newcommand{\Sph}{\mathbb{S}}

\newcommand{\su}{L^2}

\newcommand{\met}{\la\ ,\,\ra}

\newcommand{\M}{{\mathcal B}}

\newcommand{\Fol}{{\mathcal F}}

\newcommand{\gencyl}{(\su\times\R^{n-2})/G}
\newcommand{\gencylrec}{\su\times\R^{n-2}}

\newcommand{\cyl}{(\su\times\R^{n-2})/G}
\newcommand{\tcyl}{\su\times\R^{n-2}}

\newcommand{\s}{{\mathcal{S}}}
\newcommand{\C}{{\mathbb{C}}}
\newcommand{\R}{{\mathbb{R}}}
\newcommand{\Z}{{\mathbb{Z}}}
\newcommand{\Q}{{\mathbb{Q}}}
\newcommand{\N}{{\mathbb{N}}}
\newcommand{\g}{{{K}}}

\newcommand{\SO}{\ensuremath{\operatorname{SO}}}

\newcommand{\Sp}{\ensuremath{\operatorname{Sp}}}

\newcommand{\oo}{{\mathfrak{o}}}

\def\con#1=#2(#3){#1 \equiv #2 \bmod{#3}}

\newcommand{\ml}{\langle} 
\newcommand{\mr}{\rangle}
\newcommand{\la}{\langle}
\newcommand{\ra}{\rangle}

 \DeclareMathOperator{\Iso}{Iso}

\DeclareMathOperator{\SL}{SL}
\DeclareMathOperator{\Diff}{Diff}

\begin{document}

\title{Geometric Graph Manifolds\\with non-negative scalar curvature} 

\author{Luis A. Florit}
\address{IMPA: Est. Dona Castorina 110, 22460-320, Rio de Janeiro,
Brazil}
\email{luis@impa.br}
\author{Wolfgang Ziller}
\address{University of Pennsylvania: Philadelphia, PA 19104, USA}
\email{wziller@math.upenn.edu}
\thanks{The first author was supported by CNPq-Brazil,
and the second author by a grant from the National Science
Foundation, by IMPA, and CAPES-Brazil}

\begin{abstract}
We classify $n$-dimensional \ggms with nonnegative scalar curvature
by first showing that if $n>3$, the universal cover splits off a
codimension 3 Euclidean factor. We then proceed with the classification
of the 3-dimensional case, where the condition is equivalent to the
eigenvalues of the Ricci tensor being $(\lambda,\lambda, 0)$ with
$\lambda\ge 0$. In this case we prove that such a manifold is either a
lens space or a prism manifold with a very rigid metric. This allows us
to also classify the moduli space of such metrics: it has infinitely many
connected components for lens spaces, while it is connected for prism
manifolds.
\end{abstract}
\maketitle

A \ggm $M^n$ is a Riemannian manifold which is the union of twisted
cylinders $C^n=\gencyl$, where $G\subset\Iso(\gencylrec)$ acts properly
discontinuously and freely on the Riemannian product of a connected
surface $\su$ with the Euclidean space $\R^{n-2}$. In addition, the
boundary of each twisted cylinder is a union of compact totally geodesic
flat hypersurfaces, each of which is isometric to a boundary component of
another twisted cylinder. In its simplest form, as first discussed in
\cite{g}, they are the union of building blocks of the form $\su\times
S^1$, where $\su$ is a surface, not diffeomorphic to a disk or an
annulus, whose boundary is a union of closed geodesics. The building
blocks are glued along common boundary totally geodesic flat tori by
switching the role of the circles. Such graph manifolds have been studied
frequently in the context of manifolds with nonpositive sectional
curvature. In fact, they were the first examples of such metrics with
geometric rank one. Furthermore, in \cite{sc} it was shown that if a
complete 3-manifold with nonpositive sectional curvature has the
fundamental group of a graph manifold, then it is isometric to a
\ggm\!\!.

One of the most basic features of \ggms is that their curvature
tensor has  nullity space of dimension at least $n-2$ everywhere.
This property by itself already guarantees that each finite volume
connected component of the set of non-flat points is a twisted cylinder,
and under some further weak assumptions, the manifold is
isometric to a \ggm in the above sense; see \cite{fz2}.
See also \cite{BKV} and references therein for extensive literature
on manifolds with nullity equal to $n-2$.

In dimension $3$, the nullity condition is equivalent to saying that the
eigenvalues of the Ricci tensor are $(\lambda,\lambda, 0)$, or to the
assumption, called cvc(0), that every tangent vector is contained in a
flat plane; see \cite{sw}. Notice that this is in fact the only choice
for the eigenvalues of the Ricci tensor where the metric is allowed to be
locally reducible.

This nullity condition also arose in a different context. In \cite{fz1}
it was shown that a compact immersed submanifold $M^n\subset\R^{n+2}$
with nonnegative sectional curvature is either diffeomorphic to the
sphere $\Sph^n$, isometric to a product of two convex hypersurfaces
$\Sph^{k}\times\Sph^{n-k}\subset \R^{k+1}\times\R^{n-k+1}$, isometric to
$(\Sph^{n-1}\times\R)/\Z$, or diffeomorphic to a lens space
$\Sph^3/\Z_p\subset\R^5$. In the latter case it was shown that each
connected component of the set of nonflat points is a twisted cylinder.
The present paper arose out of an attempt to understand the intrinsic
geometry of such metrics. We thus want to classify all compact \ggms with
nonnegative sectional curvature, or equivalently, with nonnegative scalar
curvature. Notice that under this curvature assumption compactness is
equivalent to finite volume.

\medskip
We first show that their study can be reduced to dimension three.

\begin{main}\label{nonneg}
Let $M^n$, $n\geq 4$, be a compact \ggm with nonnegative scalar
curvature. Then, the universal cover $\tilde M^n$ of $M^n$ splits off an
$(n-3)$-dimensional Euclidean factor isometrically, i.e.,
$\tilde M^n=N^3\times\R^{n-3}$. Moreover, either $M^n$ is flat, or
$N^3=\Sph^2\times\R$ splits isometrically, or $N^3=\Sph^3$ with a \ggm
metric.
\end{main}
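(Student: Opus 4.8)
The plan is to push all the curvature onto the surface factor, classify the surfaces by Gauss--Bonnet, and then isolate a parallel $(n-3)$-dimensional distribution inside the nullity which furnishes the Euclidean splitting. First, since each twisted cylinder carries the product metric on $\gencylrec$, the scalar curvature of $M^n$ equals $2K$, where $K$ is the Gaussian curvature of the surface factor $\su$; thus nonnegative scalar curvature is equivalent to $K\ge 0$ on every building surface. I would record the complementary fact that the curvature tensor is supported on the $2$-plane $\Delta^\perp$ tangent to $\su$, while the nullity distribution $\Delta$ always contains the flat $\R^{n-2}$-factor of each block and has dimension exactly $n-2$ at every nonflat point.

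Next, because the boundary components of each $\su$ are closed geodesics, Gauss--Bonnet gives $\int_{\su}K\,dA=2\pi\,\chi(\su)$. With $K\ge 0$ this forces $\chi(\su)\ge 0$, with strict inequality as soon as $\su$ is nonflat; a surface with boundary and $\chi>0$ is a disk, while a closed one is $\Sph^2$ (or $\RP^2$, whose orientable double cover is $\Sph^2$). Hence, up to the quotient by $G$, every nonflat block is either a solid-torus-type block $\su\times\R^{n-2}$ with $\su$ a disk, or $\Sph^2\times\R^{n-2}$, and all remaining blocks are flat. I would also note that smooth matching across the totally geodesic flat gluing hypersurfaces confines the curvature of each surface to its interior, away from the boundary.

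The heart of the argument is the splitting, and the Euclidean factor should come from the nullity directions that no gluing ever disturbs. Within a block the nullity $\Delta=\R^{n-2}$ is parallel, so the block splits as $\su\times\R^{n-2}$. At a gluing wall $\Sigma=\partial\su\times\R^{n-2}$, the prescription of switching the role of the circles interchanges the fiber direction of one block with a boundary-geodesic direction of its neighbor; this is a rotation confined to the $2$-plane they span, and it fixes the orthogonal $(n-3)$-dimensional subspace $W\subset T\Sigma$. Since $W$ lies in the nullity on both sides and is pointwise fixed by the gluing isometry, it is parallel across the wall. I would then show that these spectator subspaces patch to a single parallel, flat, $(n-3)$-dimensional distribution on $M^n$; as $M^n$ is compact, hence complete, the de~Rham decomposition splits its universal cover as $\tilde M^n=N^3\times\R^{n-3}$ with the $\R^{n-3}$ factor flat.

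The hard part will be exactly this global patching. Locally the swap is manifestly a $2$-dimensional operation, but around nontrivial loops in the underlying graph one must exclude monodromy that rotates $W$, and one must check that $W$ is respected by the deck group $G$ of each twisted cylinder. The rigidity I would exploit is supplied by the walls: each $\Sigma$ is totally geodesic and flat, so by the Gauss equation $R|_{T\Sigma}=0$, which together with the product structure of the blocks pins down the parallel transport of $W$; assembling these local constraints into one globally parallel distribution is the delicate step. Once the splitting is in hand, $N^3$ is a simply connected $3$-dimensional \ggm with $K\ge 0$, and the surface classification above leaves only three outcomes: a closed $\Sph^2$-block has no boundary and forces $N^3=\Sph^2\times\R$; solid-torus blocks alone assemble into the closed simply connected $3$-manifold $\Sph^3$; and if every block is flat then $M^n$ is flat.
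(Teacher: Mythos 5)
Your overall skeleton --- classify the generating surfaces, extract a codimension-three parallel distribution, then split --- matches the paper's, but two of your steps have genuine gaps, and they are precisely where the paper does its real work. First, your Gauss--Bonnet classification of the blocks assumes that each generating surface $\su$ is compact and that its boundary components are \emph{closed} geodesics. Neither is given: only the quotient $C=\gencyl$ is compact, and the paper explicitly warns (item 4 of the remarks in Section~\ref{prelim}) that the boundary geodesics of $\su$ need not be closed even when $C$ is compact. So the identity $\int_{\su}K\,dA=2\pi\chi(\su)$ is not available a priori, and ``nonflat block $\Rightarrow$ disk'' is exactly what has to be proven, not an input. The paper gets this from the Cheeger--Gromoll soul theorem applied to the compact cylinder $C$ with totally geodesic boundary (Proposition~\ref{cyl}): the soul is $(T'\times\R^{n-2})/G$ with $T'$ a point or a complete geodesic; in the first case $\su$ is a disk, and in the second Perelman's solution of the soul conjecture forces $C$ to be flat.

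Second, you correctly identify the global patching of the spectator distribution as ``the delicate step,'' but you never resolve it, and the rigidity you invoke does not resolve it either: the Gauss equation $R|_{T\Sigma}=0$ on a wall holds automatically and constrains nothing about monodromy around loops in the graph. What actually eliminates the monodromy problem is a structural dichotomy you never establish: since a nonflat block is a twisted cylinder over a disk, its interior boundary $(\gamma\times\R^{n-2})/G$ is \emph{connected}; after absorbing flat cylinders into their nonflat neighbors (Remark~\ref{noflat}), the manifold is therefore either a single cylinder with one-sided core or two cylinders glued along a single core $H$ (Corollary~\ref{twopos}). With exactly one wall there are no loops, and all that remains is to extend $J=F\cap F'$ from $H$ into each cylinder; the paper does this by lifting to $\su\times\R^{n-2}$ and showing the leaves have the form $\{x\}\times W$ for an affine hyperplane $W\subset\R^{n-2}$ invariant under the projection of $G$ to $\Iso(\R^{n-2})$ --- which is precisely the deck-group check you flag but do not perform (the one-sided case is then handled via the two-fold cover of Remark~\ref{2fold}). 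The wall count is not a technicality: the paper observes that $k$ walls only yield an $(n-k-2)$-dimensional Euclidean factor, so without the dichotomy your argument cannot reach the stated codimension $3$.
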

By the splitting theorem, the curvature condition by itself already
implies that $\tilde M^n$ is isometric to a product $Q^k\times\R^{n-k}$
with $Q^k$ compact and simply connected, but it is surprisingly delicate
to show that $k\leq 3$.

\smallskip

In dimension three, the simplest nontrivial example of a \ggm with
nonnegative scalar curvature is the usual description of $\Sph^3$ as the
union of two solid tori $D^2\times S^1$ endowed with a product metric,
see \mbox{Figure 1}. If this product metric is invariant under
$\SO(2)\times \SO(2)$, we can also take a quotient by the cyclic group
generated by $R_p\times R_p^q$ to obtain a \ggm metric on any lens space
$L(p,q)=\Sph^3/\Z_p$. Here $R_p\in \SO(2)$ denotes the rotation of angle
$2\pi/p$.

\begin{figure}[!ht]
\centering
\includegraphics[width=0.3\textwidth]{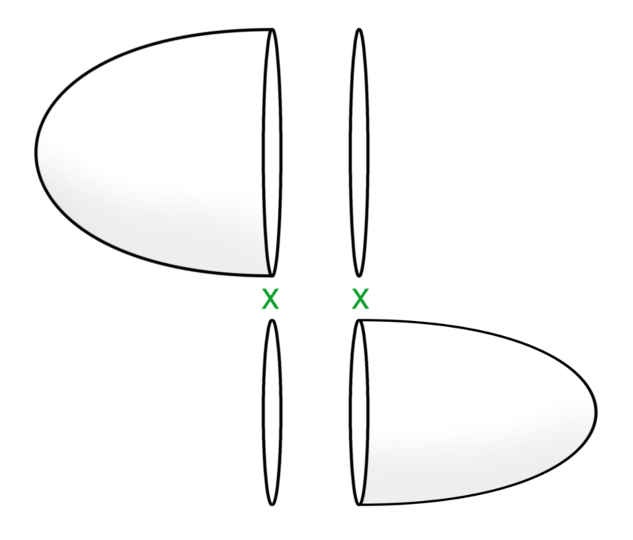}
\caption{\small $\Sph^3 \subset \R^5$ with
nonnegative curvature}
\end{figure}

There is a further family whose members also admit \ggm metrics with
nonnegative scalar curvature: the so-called {\it prism manifolds}
$P(m,n):=\Sph^3/G_{m,n}$, which depend on two relatively prime positive
integers $m,n$. Such a metric on $P(m,n)$ can be constructed as a
quotient of the metric on $\Sph^3$ as above by the group $G_{m,n}$
generated by $R_{2n} \times R_{2n}^{-1}$ and $(R_m \times R_m)\circ J$,
where $J$ is a fixed point free isometry switching the two isometric
solid tori. Topologically $P(m,n)$ is thus a single solid torus whose
boundary is identified to be a Klein bottle. Its fundamental group
$G_{m,n}$ is abelian if and only if $m=1$, and in fact $P(1,n)$ is
diffeomorphic to $L(4n,2n-1)$; see \sref{prelim}. Unlike in the case of
lens spaces, the diffeomorphism type of a prism manifold is determined by
its fundamental group.

\medskip

Our main purpose is to show that these are the only three dimensional
compact \ggms with nonnegative scalar curvature, and to classify the
moduli space of such metrics. We will see that the twisted cylinders in
this case are of the form $C=(D\times\R)/\Z$, where $D$ is the interior
of a 2-disk of nonnegative Gaussian curvature, whose boundary
$\partial D$ is a closed geodesic along which the curvature vanishes to
infinite order. We fix once and for all such a metric $\ml\,,\mr_0$ on a
2-disc $D_0$, whose boundary has length~$1$ and which is rotationally
symmetric. We call a \ggm metric on a \mbox{3-manifold} {\it standard} if
the generating disk $D$ of a twisted cylinder $C$ as above is isometric
to the interior of $D_0$ with metric $r^2\ml\,,\mr_0$ for some constant
$r>0$. Observe that the projection of $\partial D\times\{s\}$ for
$s\in\R$ is a parallel foliation by closed geodesics of the flat totally
geodesic 2-torus $(\partial D \times\R)/\Z$.

\smallskip

We provide the following classification:

\begin{main}\label{ls}
Let $M^3$ be a compact \ggm with nonnegative scalar
curvature and irreducible universal cover. Then $M^3$ is diffeomorphic to
a lens space or a prism manifold. Moreover, we have either:
\vspace{-1pt}
\begin{itemize}[leftmargin=0.8cm]
\item[$a)$] $M^3$ is a lens space and $M^3=C_1 \sqcup T^2 \sqcup C_2$,
i.e., $M^3$ is isometrically the union of two twisted cylinders
$C_i=(D_i\times\R)/\Z$ over disks $D_i$ glued together along their common
totally geodesic flat torus boundary $T^2$. Conversely, any flat torus
endowed with two parallel foliations by closed geodesics uniquely defines
a standard \ggm metric on a lens space;
\item[$b)$] $M^3$ is a prism manifold and $M^3=C \sqcup K^2$, i.e., $M^3$
is isometrically the closure of a single twisted cylinder
$C=(D\times\R)/\Z$ over a disk $D$, whose totally geodesic flat interior
boundary is isometric to a rectangular torus $T^2$, and $K^2=T^2/\Z_2$ is
a Klein bottle. Conversely, any rectangular flat torus endowed with a
parallel foliation by closed geodesics uniquely defines a standard \ggm
metric on a prism manifold.
\end{itemize}
\vspace{-1pt}
In addition, any \ggm metric with nonnegative scalar curvature on $M^3$ is
isotopic, through \ggm metrics with nonnegative scalar curvature,
to a standard one.
\end{main}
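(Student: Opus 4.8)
The plan is to exploit the decomposition furnished by parts $a)$ and $b)$: a \ggm metric of nonnegative scalar curvature on $M^3$ is encoded by the metrics on the generating disks $D_i$ --- each with nonnegative Gaussian curvature, geodesic boundary, and $K$ vanishing to infinite order along $\partial D_i$ --- together with the flat gluing data along the torus $T^2$ (lens case) or the Klein bottle $K^2$ (prism case). I would first record that, by Gauss--Bonnet, every such disk has total curvature exactly $2\pi$, since $\chi(D)=1$ and $\partial D$ is a geodesic; this is precisely the invariant realized by the standard model $D_0$. The isotopy is then assembled from two independent moves: deform each disk to the standard rotationally symmetric profile while keeping the induced flat metric and foliation on $\partial D$ fixed, and afterwards join the residual flat gluing data. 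Because the infinite-order flatness at $\partial D$ is maintained throughout, the glued metric stays smooth across the interfaces during the whole homotopy.

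For the disk move I would pass to Fermi coordinates $(u,\theta)$ based at $\partial D$, with $u=d(\,\cdot\,,\partial D)$ and $\theta$ arc length on $\partial D$, so that $g=du^2+\varphi(u,\theta)^2\,d\theta^2$. The geodesic boundary gives $\varphi(0,\theta)=1$ and $\partial_u\varphi(0,\theta)=0$, the infinite-order flatness says that $\varphi(\,\cdot\,,\theta)$ agrees with the constant $1$ to infinite order at $u=0$, and the Jacobi equation $\partial_u^2\varphi=-K\varphi$ converts nonnegative Gaussian curvature into the \emph{pointwise} concavity $\partial_u^2\varphi\le 0$. Since concavity in $u$ is a convex condition, the linear interpolation $\varphi_s=(1-s)\varphi+s\,\bar\varphi$ toward the $\theta$-independent profile $\bar\varphi$ of the standard disk $r^2\ml\,,\mr_0$, with $r$ equal to the length of $\partial D$ so that the two share the same boundary circle, stays concave in $u$ and preserves every boundary condition; hence each $\varphi_s$ defines a metric of nonnegative curvature with the required boundary behavior. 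The interpolation is carried out equivariantly with respect to the screw motion generating the cylinder (respectively the fixed-point-free involution in the prism case, which also keeps $T^2$ rectangular), so the quotient \ggm structure is preserved at every stage.

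The hard part is global. The Fermi description is valid only up to the cut locus of $\partial D$, where $\varphi\to 0$ and the disk caps off, and two such warping functions will in general vanish along different cut loci; so it is not automatic that the interpolants $\varphi_s$ close up into a \emph{smooth} metric on the closed disk. My plan is to precede the interpolation by a deformation, through metrics of nonnegative curvature, after which the cut locus of $\partial D$ is a single interior point lying at constant distance from the boundary --- matching the structure of the rotationally symmetric model --- and then to verify the smooth-capping conditions for the entire family $\varphi_s$ at that center. Here the rigidity coming from total curvature $=2\pi$ is what forces the limiting angular data to be consistent, exactly as in the rotationally symmetric case; controlling the inner boundary of the Fermi chart uniformly in $s$ is where the genuine difficulty lies.

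Finally, once the disks are standard, only the flat gluing datum remains: for a lens space a flat torus carrying the two foliations by closed geodesics induced by $\partial D_1$ and $\partial D_2$, and for a prism a rectangular torus with one such foliation together with its $\Z_2$ identification. These range over a connected, finite-dimensional moduli space of flat structures with marked lattice directions, so they can be joined to any chosen standard configuration by an explicit path of flat metrics, the disks remaining standard under rescaling. Concatenating the two moves produces an isotopy, through \ggm metrics of nonnegative scalar curvature, from the original metric to a standard one, as claimed.
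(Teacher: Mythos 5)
There is a genuine gap, and it sits exactly where you admit it does: the disk deformation. Your Fermi-coordinate interpolation $\varphi_s=(1-s)\varphi+s\,\bar\varphi$ is only defined on the region where Fermi coordinates based at $\partial D$ are valid, i.e.\ up to the cut locus of $\partial D$; beyond that the chart does not exist, and at the cut locus the two warping functions $\varphi$ and $\bar\varphi$ degenerate along different sets and in different ways, so the interpolants $\varphi_s$ do not close up into smooth metrics on the closed disk. Your proposed remedy --- a preliminary deformation, through metrics of nonnegative curvature, making the cut locus of $\partial D$ a single interior point at constant distance from the boundary --- is not constructed at all; it is stated as a plan. But a disk whose boundary has a one-point cut locus at constant distance is already essentially rotationally symmetric in the relevant sense, so this preliminary step is not easier than the theorem you are trying to prove: the entire difficulty has been pushed into it. The Gauss--Bonnet remark (total curvature $2\pi$) does not supply the needed ``smooth capping'' control for the whole family $\varphi_s$; it is a single scalar constraint and cannot force the limiting angular data at the (moving, $s$-dependent) singular set to be consistent.

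The paper avoids this global issue entirely by working conformally rather than in Fermi coordinates. By uniformization one writes the disk metric as $f^*(e^{2u}\met')$ with $\met'$ the flat metric on $D_0$, and after composing with a hyperbolic isometry one may assume the conjugated deck transformation fixes the origin, hence lies in $\SO(2)$ and leaves $u$ invariant. In this chart all three constraints are affine/convex in $u$: nonnegative curvature is $-\Delta u\ge 0$ (since $Ke^{2u}=-\Delta u$), geodesic boundary is a fixed normal-derivative condition, and infinite-order flatness is infinite-order vanishing of $\Delta u$. The straight-line path $(1-s)u_0+su$ (plus a normalizing constant to fix the boundary length) therefore stays in the admissible class for every $s$, with no cut locus or capping problem because the conformal chart is global on the disk. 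Separately, note that your proposal only addresses the isotopy clause of the theorem: the diffeomorphism classification (orientability via Lefschetz, $G\simeq\Z$, solid-torus/Klein-bottle dichotomy), the converse constructions from a flat torus with foliations, and the uniqueness up to isometry are taken for granted, whereas they are substantive parts of the statement and of the paper's argument.
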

We call $T^2$, respectively $K^2$, the {\it core} of the \ggm
and will see that it is in fact an isometry invariant.

Observe that a twisted cylinder with generating surface a disc is
diffeomorphic to a solid torus. In topology one constructs a lens space
by gluing two such solid tori along their boundary by an element of
$GL(2,\Z)$. In order to make this gluing into an isometry, we twist the
local product structure. An alternate way to view this construction is as
follows. Start with an arbitrary twisted cylinder $C_1$ and regard the
flat boundary torus as the quotient of $\R^2$ with respect to a lattice.
We can then choose a second twisted cylinder $C_2$ whose boundary is a
different fundamental domain of the same lattice, and hence the two
twisted cylinders can be glued with an isometry of the boundary tori.
We note that in principle, a twisted cylinder can also be flat, but
we will see that in that case it can be absorbed by one of the nonflat
twisted cylinders.

\smallskip

The diffeomorphism type of $M^3$ in \tref{ls} is determined by the
(algebraic) oriented slope between the parallel foliations of $T^2$ by
closed geodesics. As we will see, this is also an isometry invariant
$\s(M^3,\oo)\in\Q$ of $M^3$ which we call its {\it slope}, once
orientations $\oo$ of $M^3$ and its core are chosen; see \sref{lenss} for
the precise definition.

\begin{main}\label{diffeo}
Let $M^3$ be a compact \ggm of nonnegative scalar curvature with
irreducible universal cover and slope $\s(M^3,\oo)=q/p\in\Q$. Then, in
case~$(a)$ of \tref{ls}, $M^3$ is diffeomorphic to the lens space
$L(p,q)$, while in case $(b)$ it is diffeomorphic to the prism manifold
$P(q,p)$.
\end{main}

This result can be used to classify the moduli space of \ggm metrics.
We first deform any such metric in \tref{ls} to be standard,
preserving the metric on the torus $T^2$, and then deform $T^2$ to be the
unit square $S^1\times S^1$, while preserving also the sign of the scalar
curvature in the process. In case $(a)$, we can also make one of the
foliations equal to $S^1\times \{w\}$. The metric is then determined by
the remaining parallel foliation of the unit square by closed geodesics.
Since the
diffeomorphism type of a lens space $L(p,q)$ is determined by
$\pm q^{\pm 1}\!\!\mod p$, we conclude:

\begin{cor*}
The moduli space of \ggm metrics with nonnegative scalar curvature on a
lens space has infinitely many connected components, whereas on
a prism manifold $P(q,p)$ with $q>1$ it is connected.
\end{cor*}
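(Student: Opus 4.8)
The plan is to use \tref{ls} to reduce the computation of $\pi_0$ of the moduli space to a count of realizable algebraic slopes. By the last sentence of \tref{ls}, every \ggm metric with nonnegative scalar curvature on a fixed lens space or prism manifold is isotopic, through such metrics, to a standard one; hence each connected component of the moduli space contains a standard representative, and it suffices to decide when two standard metrics lie in the same component. The natural invariant is the relative algebraic slope of the parallel foliations by closed geodesics on the gluing torus $T^2$ (respectively on the rectangular interior boundary torus in the prism case), as defined in \sref{lenss}.

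First I would show that this slope is locally constant on the moduli space, and therefore constant on each connected component. Along any continuous path of \ggm metrics with nonnegative scalar curvature the decomposition of \tref{ls} persists, so the two foliations remain foliations by \emph{closed} geodesics and their leaves determine well-defined primitive homology classes in $H_1(T^2;\Z)\cong\Z^2$. These classes vary continuously in the discrete set $\Z^2$, hence are constant along the path, and the relative slope, being the ratio extracted from this integral lattice data, is constant as well. Conversely, the explicit deformations indicated after \tref{ls}—first deform to a standard metric, then normalize $T^2$ to the unit square while preserving the sign of the scalar curvature, and finally rescale and rotate—show that the family of standard metrics realizing a fixed slope is path-connected inside the moduli space. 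Thus the connected components are in bijection with the set of relative slopes that are realizable by a standard \ggm metric and yield the prescribed diffeomorphism type.

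For a lens space $L(p,q)$ the relative slope of a standard metric is a rational number $q'/p$ in lowest terms, $\gcd(q',p)=1$, and by the discussion following \tref{ls} the resulting manifold is $L(p,q')$, which is diffeomorphic to $L(p,q)$ exactly when $q'\equiv\pm q^{\pm1}\pmod p$. Since the integers $q,\,q+p,\,q+2p,\dots$ all satisfy this congruence, are coprime to $p$, and are pairwise distinct as slopes, there are infinitely many admissible slopes, hence infinitely many connected components. For a prism manifold $P(m,n)$ the relative slope equals $m/n$; because the diffeomorphism type of a prism manifold is determined by its fundamental group, and hence by the coprime pair $(m,n)$, the hypothesis $m>1$ forces the slope to be the single value $m/n$. (The excluded case $m=1$ yields $P(1,n)\cong L(4n,2n-1)$, which is already a lens space.) Consequently all \ggm metrics with nonnegative scalar curvature on $P(m,n)$ share one slope, and the moduli space is connected.

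The main obstacle I anticipate is the local-constancy step: one must verify that a deformation through \ggm metrics with nonnegative scalar curvature cannot alter the slope by jumping between homology classes, i.e.\ that the twisted-cylinder decomposition and the closedness of the geodesic leaves are genuinely preserved along the entire path rather than merely at its endpoints. This is what rigidly pins the slope to the discrete data of $H_1(T^2;\Z)$ and converts the continuum of rational slopes into a genuine $\pi_0$-invariant; the remaining count and the path-connectedness of fixed-slope standard metrics are then routine consequences of \tref{ls} and the normalizations described above.
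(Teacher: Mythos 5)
Your strategy is the same as the paper's: use \pref{std} (the last statement of \tref{ls}) to put every metric in standard form, treat the relative slope as a locally constant invariant on the moduli space, and then count realizable slopes ($q,\,q+p,\,q+2p,\dots$ for $L(p,q)$; the single value $m/n$ for $P(m,n)$, $m>1$). The counting, and the connectedness of the set of standard metrics with a fixed slope, match the paper. The genuine gap is in your local-constancy step --- exactly the step you flagged as the main obstacle --- and the homological argument you propose does not close it. The relative slope is \emph{not} ``extracted from the integral lattice data'': the homology classes $[\gamma_1],[\gamma_2]\in H_1(T^2;\Z)$ of leaves of the two foliations determine only the intersection number $p$ and the residue $q\bmod p$. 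The integer $q$ itself is defined through the \emph{normalized} marking $\{v_1,\hat v_1\}$, i.e.\ through the metric condition $\la v_1,\hat v_1\ra/\|v_1\|^2\in[0,1)$, and this normalization is not homological: along a path of metrics the normalized completion $\hat v_1$ can jump by $v_1$, in which case $q$ jumps by $p$. Concretely, shear the core: let $\Lambda_t=\Z(1,0)\oplus\Z(t,1)$ for $t\in[0,1]$, let $\Fol_1$ stay in the direction $(1,0)$, and let $\Fol_2$ be the foliation in the direction of the lattice vector $q(1,0)+p(t,1)$. Under the natural continuous identification of $H_1$ along the path both homology classes are constant, and the associated standard cylinders and gluings vary continuously (a cylinder's twist enters the metric only through $R_\theta$, which depends on $\theta\bmod 1$, so the wrap-around of $\theta_1$ at $t=1$ is invisible in the metric); yet the slope read off from the normalized markings is $q/p$ at $t=0$ and $(q+p)/p$ at $t=1$. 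So constancy of the homology classes --- which is all your argument establishes --- does not imply constancy of the slope; worse, the slopes this shear connects are precisely the family $q,\,q+p,\,q+2p,\dots$ on which your infinitude count is based.

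Note where this sits relative to the paper: the paper does not attempt a homological argument; its proof simply asserts that metrics with different slopes cannot be deformed into one another ``since the invariant is a rational number,'' i.e.\ it takes continuity of the slope along deformations for granted. You correctly identified that the entire lens-space statement hangs on this point, but your proposed justification is insufficient, and the shear above shows that no argument based on $H_1(T^2;\Z)$ alone can work: any proof must control the metric normalization (the twists $\theta_i\in[0,1)$ of the two cylinders) along the whole deformation, and it is exactly this control that fails when a twist crosses an integer value. As written, the lens-space half of your proof is not established; the prism-manifold half (connectedness) is unaffected, since it only uses the deformation of an arbitrary metric to the unique normal form and never needs the slope to be locally constant.
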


We will see that the moduli space for the lens space $L(4p,2p-1)$ has a
special component arising from the fact that it is diffeomorphic to
$P(1,p)$.

\medskip

Finally, we apply our results, combined with those in \cite{fz2}, to the
class of compact \mbox{3-dimensional} manifolds $M^3$ with Ricci
eigenvalues $(\lambda,\lambda,0)$ for $\lambda\geq 0$. Theorem A in
\cite{fz2} implies that any connected component of the set $M'$ of
non-flat points of $M^3$ is isometric to a twisted cylinder. The basic
geometric feature of $M'$ is that it admits a parallel foliation by
complete geodesics tangent to the kernel of the Ricci tensor. If there
exists a larger open set $M''\supset M'$ which admits a parallel
foliation by complete geodesics extending that of $M'$, then any
connected component of $M''$ is still isometric to a twisted cylinder.
Such an extension $M''$ is called {\it full} if it is dense in $M^3$ and
if its collection of twisted cylinders is locally finite. From the
second theorem in \cite{fz2} we thus conclude the following.

\begin{cor*} Let $M^3$ be a compact Riemannian manifold with Ricci
eigenvalues $(\lambda,\lambda,0)$ for some function $\lambda\ge 0$. Then
$M^3$ is isometric to one of the manifolds in \tref{ls} if and only if
its set of nonflat points admits a full extension.
\end{cor*}
This applies of course if $M'$ is already dense, as long as it
satisfies the mild regularity assumption that its collection of twisted
cylinders is locally finite. Although in \cite{fz2} we built an explicit
example where $M'$ admits no full extension, we conjecture that it always
admits a full extension when $\lambda\geq 0$.

\medskip

The paper is organized as follows. In Section \sref{prelim} we recall
some facts about geometric graph manifolds. In \sref{nonnegn} we prove
\tref{nonneg} by showing that the manifold is a union of one or two
twisted cylinders over disks, while in \sref{lenss} we classify their
metrics.

\section{Preliminaries}\label{prelim}

Let us begin with the definition of twisted cylinders and \ggms\!\!.

\smallskip

Consider the cylinder $\su\times\R^{n-2}$ with its natural product
metric, where $\su$ is a
connected surface.
We call the  quotient
$$
C^n=\gencyl
$$
a {\it twisted cylinder}, where $G\subset\Iso(\gencylrec)$ acts properly
discontinuously and freely on $\gencylrec$, and $\su$ the {\it generating
surface} of $C^n$. We also say that $C^n$ is a twisted cylinder {\it
over} $\su$. The Euclidean factor induces a foliation $\Gamma$ on $C^n$
whose leaves will be called the {\it nullity leaves} of $C^n$. These
leaves are complete flat totally geodesic and locally parallel of
codimension~$2$. Such twisted cylinders are the building blocks of
geometric graph manifolds:

\medskip

{\it Definition.}
A complete connected Riemannian manifold $M^n$, $n\geq 3$, is called a
{\it \ggm} if $M^n$ is a locally finite disjoint union of twisted
cylinders $C_i$ glued together along disjoint compact connected
totally geodesic flat hypersurfaces $H_\lambda$ of $M^n$. That is,
$$
M^n\setminus W= \bigsqcup_\lambda H_\lambda,
\ \ \ {\rm where}\ \ \ W:=\bigsqcup_i C_i.
$$
See Figure~2 for a typical (4-dimensional) example, where each twisted
cylinder is just the isometric product $L^2\times S^1\times S^1$ of a
surface $L^2$ and a flat torus.

\begin{figure}[!ht]
\centering
\includegraphics[width=0.74\textwidth]{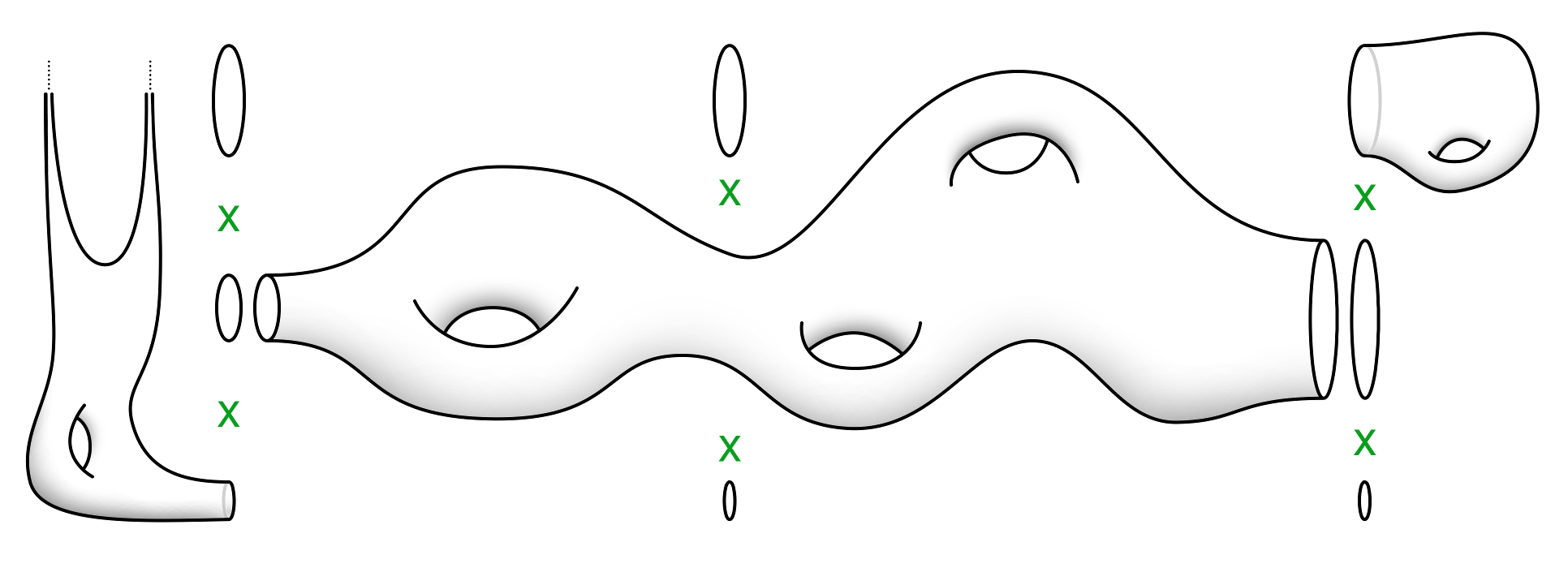}
\caption{\small An irreducible 4-dimensional \ggm}
\centerline{\small with three cylinders and two (finite volume) ends}
\end{figure}

\smallskip

We first make some general remarks about this definition.

\smallskip

\begin{enumerate}[labelindent=0pt,labelwidth=1em,label=\arabic*.,itemindent=0em,leftmargin=!]
\item We allow the possibility that the hypersurfaces $H_\lambda$ are
one-sided, even when $M^n$ is orientable.
\item The locally finiteness condition is equivalent to the assumption
that each $H_\lambda$ is a common boundary component of two twisted
cylinders $C_i$ and $C_j$, that may even be globally the same. When
$H_\lambda$ is one-sided it is a boundary component of only one twisted
cylinder.
\item As shown in \cite{fz2}, the foliations $\Gamma_i$ and $\Gamma_j$ of
$C_i$ and $C_j$ induce two totally geodesic foliations on $H_\lambda$.
When they agree, $C_i$, $C_j$ and $H_\lambda$ can be considered as a
single twisted cylinder. Thus, without loss of generality,
{\it we assume from now on that they are different}.
This implies that the generating surface $\su$
of each twisted cylinder~$C$ is the interior of a surface
with boundary consisting of complete geodesics along which the Gaussian
curvature vanishes to infinite order. We refer to these geodesics as
boundary geodesics of $\su$ itself.
\item These boundary geodesics of $\su$ do not have to be closed,
even when $C$ is compact.
\item The complement of $W$ is contained in the set of flat points of
$M^n$, but we do not require that the generating surfaces have
nonvanishing Gaussian curvature.
\item In principle, we could ask for the hypersurfaces $H_\lambda$ to
be complete instead of compact. However, compactness follows when $M^n$
has finite volume; see \cite{fz2}.
\item If none of the generating surfaces in a geometric graph manifold
are discs, it also admits a metric with nonpositive sectional curvature.
On the other hand, if all of the generating surfaces are discs, we will
see that it admits a metric with nonnegative sectional curvature.
\end{enumerate}

\medskip

In \cite{fz2} we gave a characterization of \ggms with finite volume in
terms of the nullity of the curvature tensor. But since a complete
noncompact manifold with nonnegative Ricci curvature has linear volume
growth by \cite{ya}, we will assume from now on that $M^n$ is compact.

\medskip

We now recall some properties of three dimensional lens spaces and prism
manifolds that will be needed later on; see e.g. \cite{st,hk,ru,or} for
details.

\medskip

One way of defining a lens space is as the quotient $L(p,q)=\Sph^3/\Z_p$,
where $g\in\Z_p\subset S^1\subset \C$ acts as $g\!\cdot\!(z,w)=(gz,g^qw)$
for $(z,w)\in\Sph^3\subset\R^4=\C^2$ for coprime integers $p,q$ with
$p\neq 0$. It is a well known fact that two lens spaces $L(p,q)$ and
$L(p,q')$ are diffeomorphic if and only if
\mbox{$q'=\pm q^{\pm 1}\!\!\mod p$}. An alternative description we will
use is as the union of two solid tori $D_i\times S^1$, with boundary
identified such that
$\partial D_1\times \{p_0\}\in \pi_1(\partial D_1\times S^1)$ is taken
into $(q,p)\in\Z\oplus\Z=\pi_1(\partial D_2\times S^1)$
with respect to its natural basis.

A prism manifold can also be described in two different ways.
The first one is to define it as the quotient
$\Sph^3/(H_1\times H_2)=H_1\backslash\Sph^3/H_2$, where
$H_1\subset\Sp(1)$ is a cyclic group acting as left translations on
$\Sph^3\simeq\Sp(1)$ and $H_2\subset\Sp(1)$ a binary dihedral group
acting as right translations. A more useful description for our purposes
is as the union of a solid torus $C=D\times S^1$ with the 3-manifold
\begin{equation}\label{pris}
N^3=(S^1\times S^1\times I)/\la(j,-Id)\ra, \ \ \ \ {\rm where}
\ \ \ j(z,w)=(-z,\bar w).
\end{equation}
Notice that $N^3$ is a bundle over the Klein bottle $K=T^2/\la j\ra$
with fiber an interval $I=[-\e,\e]$ and orientable total space.
Thus $\partial N^3$ is the torus $S^1\times S^1$, and we
glue the two boundaries via a diffeomorphism. Here
$\pi_1(N^3)=\pi_1(K)=\{ a,b\mid bab^{-1}=a^{-1}\}$ and
$\pi_1(\partial N^3)=\Z\oplus\Z$, with generators $a,b^2$, where $a$
represents the first circle and $b^2$ the second one. Then
$P(m,n)$ is defined as gluing $\partial C$ to $\partial N^3$ by sending
$\partial D\times \{p_0\}$ to $a^mb^{2n}\in\pi_1(\partial N^3)$. We can
again assume that $m,n>0$ with $\gcd(m,n)=1$. Furthermore,
$$
\pi_1(P(m,n))=G_{m,n}=\{a,b\mid bab^{-1}=a^{-1},\ a^mb^{2n}=1 \}.
$$
This group has order $4mn$ and its abelianization has order $4n$. Thus
the fundamental group determines and is determined by the ordered pair
$(m,n)$. In addition, $G_{m,n}$ is abelian if and only if $m=1$ in which
case $P(m,n)$ is diffeomorphic to the lens space $L(4n,2n-1)$. Unlike in
the case of lens spaces, the diffeomorphism type of $P(m,n)$ is uniquely
determined by $(m,n)$.
Prism manifolds can also be characterized as the 3-dimensional spherical
space forms which contain a Klein bottle, which for $m>1$ is also
incompressible. Observe in addition that in $N^3$ we can shrink the
length of the interval $I$ in \eqref{pris} down to 0, and hence $P(m,n)$
can also be viewed as a single solid torus whose rectangular flat torus
boundary has been identified to a Klein bottle, as in part $(b)$ of
\tref{ls}.

\section{A dichotomy and the proof of \tref{nonneg}}\label{nonnegn}

In this section we provide the general structure of \ggms with
nonnegative scalar curvature by showing a dichotomy: they are built from
either one or two twisted cylinders over 2-disks. This will then be used
to prove \tref{nonneg}.

\vspace{10pt}

Let $M^n$ be a compact nonflat \ggm with nonnegative scalar curvature. We
will furthermore assume that $M^n$ is not itself a twisted cylinder since
in this case the universal cover of $M^n$ is isometric to
$\Sph^2\times\R^{n-2}$, where $\Sph^2$ is endowed with a metric of
nonnegative Gaussian curvature. Recall that we also assume that the
nullity foliations of two twisted cylinders glued along a hypersurface
$H$ induce two different foliations on $H$, which in turn implies that
the Gaussian curvature of the two generating surfaces vanish to infinite
order along their boundary geodesic.

\smallskip

By assumption, there exists a collection of compact flat totally geodesic
hypersurfaces in $M^n$ whose complement is a disjoint union of (open)
twisted cylinders $C_i$. Let $C=\cyl$ be one of these cylinders whose
boundary in $M^n$ is a disjoint union of compact flat totally geodesic
hypersurfaces. There is also an {\it interior boundary} $\partial_i C$
of $C$, which we also denote for convenience as $\partial C$ by abuse of
notation. This boundary can be defined
as the set of equivalence classes of Cauchy sequences $\{p_n\}\subset C$
in the interior distance function $d_C$ of $C$, where
$\{p_n\}\sim\{p'_n\}$ if $\lim_{n\to\infty}d_C(p_n,p'_n)=0$. Since $M^n$
is compact, such a Cauchy sequence $\{p_n\}$ converges in $M^n$, and we
have a natural map $\sigma:\partial C\to M$ that sends $[\{p_n\}]$ to
$\lim_{n\to\infty}p_n\in M^n$. This map is, on each component of
$\partial C$, either an isometry or a locally isometric two-fold covering
map since $H=\sigma(\partial C)$ consists of disjoint smooth
hypersurfaces which are two-sided in the former case, and one-sided in
the latter. Therefore, $\partial C$ is smooth as well and
$C\sqcup\partial C$ is a closed twisted cylinder with totally geodesic
flat compact interior boundary, that by abuse of notation we still denote
by $C$. Similarly, $\su$ is a smooth surface with geodesic interior
boundary components along which the Gaussian curvature vanishes to
infinite order.

\vspace{10pt}

We first determine the generating surfaces of the twisted cylinders:

\begin{prop}\label{cyl} Let $C=\cyl$ be a compact twisted cylinder with
nonnegative curvature as above. Then one of the following holds:
\begin{itemize}
\item[i)]
The surface $\su$ is isometric to a 2-disk $D$ with nonnegative Gaussian
curvature, whose boundary is a closed geodesic along which the
curvature of $D$ vanishes to infinite order.
\item[ii)] $C$ is flat and there exists a compact flat hypersurface $S$
such that $C$ is isometric to either $[-{s_0},{s_0}]\times S$, or to
$([-{s_0},{s_0}]\times S)/\{(s,x)\sim (-s,\tau(x))\}$ for some involution
$\tau$ of $S$.
\end{itemize}
\end{prop}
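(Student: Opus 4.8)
The plan is to run the argument on the universal cover and split into two cases according to whether the generating surface is flat. Since $C$ is locally the product $\su\times\R^{n-2}$, the only possibly nonzero sectional curvatures are those of the $\su$-planes, so the hypothesis of nonnegative sectional curvature amounts exactly to saying that the Gaussian curvature $K$ of $\su$ is nonnegative; moreover, by the discussion preceding the proposition, $\su$ is a smooth surface with geodesic interior boundary, and this boundary is nonempty because $C$ carries a nonempty totally geodesic flat interior boundary. I would pass to the universal cover $\tilde C=\tilde{\su}\times\R^{n-2}$, where $\tilde{\su}$ is the universal cover of $\su$, with deck group $\Gamma\cong\pi_1(C)$ acting freely, properly discontinuously and, since $C$ is compact, cocompactly.

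First I would treat the flat case. If $K\equiv 0$ then $C$ is flat and $\tilde{\su}$ is a simply connected flat surface with geodesic boundary, hence a convex planar region whose boundary is a union of disjoint straight lines, namely the full plane, a half-plane, or a strip. Cocompactness of $\Gamma$ together with $\partial\su\neq\emptyset$ rules out the first two: the plane has no boundary, while for the half-plane the distance to the boundary forces every isometry to preserve the noncompact normal direction, making the quotient noncompact. Thus $\tilde{\su}$ is a strip $[-s_0,s_0]\times\R$ and $\tilde C=[-s_0,s_0]\times\R^{n-1}$. Every isometry preserves the distance to $\partial\tilde C$, hence acts as $(s,x)\mapsto(\pm s,Ax+b)$ with $A\in\O(n-1)$; the resulting sign homomorphism $\Gamma\to\Z_2$ has a Bieberbach kernel $\Gamma^+$ acting freely and cocompactly on $\R^{n-1}$, so $S:=\R^{n-1}/\Gamma^+$ is a compact flat hypersurface. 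If $\Gamma=\Gamma^+$ then $C=[-s_0,s_0]\times S$; otherwise $\Gamma/\Gamma^+=\Z_2$ is generated by an element descending to an involution $\tau$ of $S$, which is free because the $\Gamma$-action is free, and this yields the second model in $(ii)$.

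The substance of the proposition is the non-flat case, where I would show $\su$ is a disk. Suppose $\su$ has a point of positive curvature. A two-dimensional nonnegatively curved space with a nontrivial flat de Rham factor would itself be flat, so $\tilde{\su}$ has no such factor and the maximal flat factor of the metric product $\tilde C=\tilde{\su}\times\R^{n-2}$ is exactly $\R^{n-2}$; hence every isometry of $\tilde C$ preserves the $\tilde{\su}$-factor, and projecting a compact fundamental domain shows that $\Gamma$ acts cocompactly on $\tilde{\su}$. The key input is then that a complete, noncompact, nonnegatively curved surface which is not flat has compact isometry group, and a compact group cannot act cocompactly on a noncompact space; therefore $\tilde{\su}$ is compact. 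Being simply connected with nonempty boundary it is a disk, and then $\su$ is a disk as well, since a finite group acting freely on a disk is trivial by an Euler characteristic count. Finally $\partial D$ is a single closed geodesic, Gauss--Bonnet gives $\int_D K\,dA=2\pi$, and the infinite-order vanishing of $K$ along $\partial D$ is the smooth totally geodesic gluing condition built into the \ggm structure (see the remarks following the definition, and \cite{fz2}). I expect this compactness step to be the main obstacle: ruling out a noncompact non-flat generating surface requires combining the nonnegativity of the curvature (through Cohn--Vossen and soul theory) with cocompactness of the deck action, and dealing with the geodesic boundary, most cleanly by passing to the double, which is an Alexandrov space of nonnegative curvature.
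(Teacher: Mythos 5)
Your proposal reaches the correct dichotomy, but by a genuinely different route from the paper. The paper never passes to the universal cover of $\su$: it applies the Cheeger--Gromoll soul construction \cite{cg} directly to the compact manifold-with-boundary $C$, using that the superlevel sets of the distance to $\partial C$ are convex and that minimal geodesics to $\partial C$ stay inside the leaves $[\su\times\{v\}]$. The soul is then either a single nullity leaf, in which case a gradient-like field tangent to the leaves exhibits $\su$ as a disk, or a flat hypersurface of codimension one, in which case Perelman's solution of the soul conjecture \cite{pe} forces $C$ to be flat, giving case $(ii)$. You instead dichotomize on flatness of $\su$: the flat case via the classification of flat simply connected surfaces with geodesic boundary (plane, half-plane, strip) plus a Bieberbach-type analysis of the sign homomorphism, and the non-flat case via cocompactness of the projected deck action together with a total-curvature argument. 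Both are valid; the paper's argument buys brevity and gets the disk structure directly from the leaf-preserving gradient flow, while yours is more two-dimensional and elementary in its inputs (Cohn--Vossen rather than Perelman), at the cost of having to manage the universal cover and the deck group explicitly.

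Two caveats on your write-up. First, the appeal to de Rham uniqueness to conclude that isometries of $\tilde C=\tilde{\su}\times\R^{n-2}$ preserve the factors is not a standard statement for manifolds with boundary; but it is also unnecessary, since in the definition of a twisted cylinder the group $G$ permutes the Euclidean leaves (this is exactly what makes the nullity foliation of $C$ well defined, and it is how the paper uses $G$, writing $g=(g_1,g_2)$), so its lifts, and hence all of $\Gamma$, are automatically product isometries. Second, your ``key input'' --- that a complete noncompact non-flat nonnegatively curved surface has compact isometry group --- is true but is where the real work lies, and as stated it is only asserted; note that you can bypass isometry groups entirely: cocompactness gives an $R$-dense orbit of a point $p$ with $K(p)>0$, noncompactness of $\tilde{\su}$ then produces infinitely many pairwise disjoint isometric balls $B_r(\gamma_n p)$, each carrying total curvature at least some fixed $c>0$, contradicting the Cohn--Vossen bound $\int_{\tilde{\su}}K\le 2\pi\chi(\tilde{\su})\le 2\pi$ (applied, as you suggest, to the double, which is a nonnegatively curved Alexandrov surface). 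With these repairs your argument is complete and matches the proposition, including the flat models in $(ii)$.
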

\proof
Since $C$ is compact and the boundary is totally geodesic, we can apply
the soul theorem to $C$, see \cite{cg} Theorem 1.9 and \cite{pt} Theorem
4.1. Thus there exists a compact totally geodesic submanifold $S\subset
C$ and $C$ is diffeomorphic to the disc bundle
$D_\e(S)=\{v\in T_p C\mid v\perp T_pS,\ |v|\le\e\}$ for some $\e>0$.
Recall that $S$ is constructed as follows. Let
$C^s=\{p\in C\mid d(p,\partial C)\ge s\}$. Then $C^s$ is convex, and the
set of points $C^{s_0}$ at maximal distance $s_0$ from $\partial C$
is a totally geodesic submanifold, possibly with boundary. Repeating the
process if necessary, one obtains the soul $S$. In our situation, let
$q=[(p,v)]\in C^{s_0}$, and $\gamma$ a minimal geodesic from $q$ to
$\partial C$. Since it meets
$\partial C=((\partial \su)\times\R^{n-2})/G$ perpendicularly, we have
$\gamma=[(\alpha,v)]$ where $\alpha$ is a geodesic in the leaf
$\su_v=[\su\times\{v\}]$ meeting $\partial \su_v$ perpendicularly. So, for
every $w\in\R^{n-2}$, the geodesic $[(\alpha,w)]$ is also minimizing,
$[(p,w)]\in C^{s_0}$ lies at maximal distance $s_0$ to $\partial C$, and
hence $C^{s_0}=(T\times\R^{n-2})/G$ where $T\subset \su$ is a segment, a
complete geodesic or a point. Therefore $S=(T'\times\R^{n-2})/G$, where
$T'$ is a point or a complete geodesic (possibly closed).

We first consider the case where  $T'$ is a point and hence the soul is a single nullity leaf. Recall, that in order to show that $C$ is
diffeomorphic to the disc bundle
$D_\e(S)$, one constructs a gradient like vector field $X$ by observing that
the distance function to the soul has no critical points. In our case,
the initial vector to all minimal geodesics from $[(p,v)]\in C$ to~$S$
lies in the leaf $\su_v$ and hence we can construct $X$ such that $X$ is
tangent to $\su_v$ for all $v$. The diffeomorphism between $C$ and
$D_\e(S)$ is obtained via the flow of $X$, which now preserves the leaves
$\su_v$ and therefore $\su$ is diffeomorphic to a disc.

If $T'$ is a complete geodesic, the soul $S$ is flat and has
codimension 1. If $X$ is a unit vector field in $\su$ along $T'$ and
orthogonal to $T'$, it is necessarily parallel and its image under the
normal exponential map of $S$ determines a flat surface by Perelman's
solution to the soul conjecture, see \cite{pe}. This surface lies in
$\su$, and every point $q \in\su$ is contained in such a surface since we
can connect $q$ to $S$ by a minimal geodesic, which is contained in some
$L_v$, and is orthogonal to $T'$. Thus $\su$ is flat and hence either
$\su=T'\times [-{s_0},{s_0}]$, and hence $C=[-{s_0},{s_0}]\times S$, or
$\su$ is a Moebius strip and hence
$C=([-{s_0},{s_0}]\times S)/\{(s,x)\sim (-s,\tau(x))\}$ for some
involution $\tau$ of $S$.
\qed

\begin{rem}\label{noflat}
{\rm A flat twisted cylinder as in $(ii)$ can be absorbed by any cylinder
$C'$ attached to one of its boundary components by either attaching
$[-{s_0},{s_0}]$ to the generating surface of $C'$ in the first case, or
attaching $(0,s_0]$ in the second, in which case $\{0\}\times
(S/\tau)=S/\tau$ becomes a one sided boundary component of $C'$. {\it We will therefore
 assume from now on that the generating surfaces of all twisted
cylinders are 2-discs}.}
\end{rem}

\begin{rem}\label{ext}
{\rm The properties at the boundary $\gamma$ of a disk $D$ as in
\pref{cyl} are easily seen to be equivalent to the fact that the natural
gluing $D\sqcup(\gamma\times(-\e,0])$, $\gamma\cong\gamma\times\{0\}$, is
smooth when we consider on $\gamma\times (-\e,0]$ the flat product metric.
In fact, in Fermi coordinates $(s\geq 0,t)$ along $\gamma$, the metric is
given by $ds^2 +f(t,s)dt^2$. The fact that $\gamma$ is a (unparameterized)
geodesic is equivalent to $\partial_sf(0,t)=0$, while the curvature
condition is equivalent to $\partial^k_sf(0,t)=0$ for all $t$ and
$k\ge 2$. Therefore, $f(s,t)$ can be extended smoothly as $f(0,t)$ for
$-\e<s<0$, which gives the smooth isometric attachment of the flat
cylinder $\gamma\times(-\e,0]$ to $D$.}
\end{rem}

As a consequence of \pref{cyl}, and the assumption that there are no flat cylinders, $\partial C=(\gamma\times \R^{n-2})/G$ is
connected, and so is $H=\sigma(\partial C)$. In particular, $M^n$
contains at most two twisted cylinders with nonnegative curvature glued
along $H$. We call such a connected compact flat totally geodesic
hypersurface $H$ a {\it core} of $M^n$. We conclude:

\begin{cor}\label{twopos} If $M^n$ is not flat and not itself a twisted
cylinder, then $M^n=W\sqcup H$ with core $H$, and either:
\vspace{-5pt}
\begin{enumerate}
\item[$a)$] $H$ is two-sided, $\sigma$ is an isometry, and
$W=C\sqcup C'$ is the disjoint union of two open nonflat twisted
cylinders as above attached via an isometry
$\partial C\simeq H\simeq \partial C'$; or
\item[$b)$] $H$ is one-sided, $\sigma$ is a locally isometric two-fold
covering map, $W=C$ is a single open nonflat twisted cylinder as above,
and $M^n=C \sqcup H = C \sqcup (\partial C/\Z_2)$.
\end{enumerate}
Furthermore, in case $(a)$, if $H'\subset M^n$ is an embedded compact
flat totally geodesic hypersurface then there exists an isometric product
$H\times[0,a]\subset M^n$, with $H=H\times \{0\}$ and $H'=H\times \{a\}$.
In particular, any such $H'$ is a core of $M^n$, and hence
the core is unique up to isometry. On the other hand, in case $(b)$ the
core $H$ is already unique.
\end{cor}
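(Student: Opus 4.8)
The plan is to treat the dichotomy as a short synthesis of the preceding paragraph and to concentrate on the ``Furthermore'' assertions, whose heart is a single Gauss--Bonnet computation on the generating disk.

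For the dichotomy, recall from \pref{cyl} and \rref{noflat} that every generating surface may be taken to be a $2$-disk $D$, so each interior boundary $\partial C=(\gamma\times\R^{n-2})/G$ is connected and hence so is every core $H=\sigma(\partial C)$; in particular each cylinder abuts exactly one core. Since $M^n$ is connected and is not a single cylinder, and since each cylinder has only one boundary component, the cylinders cannot be strung into a chain of length $>1$: the only connected possibilities are two cylinders sharing one core, or a single cylinder whose one boundary is folded two-to-one onto a core. As $\sigma$ is an isometry exactly when $H$ is two-sided and a two-fold cover exactly when $H$ is one-sided, these give cases $(a)$ and $(b)$, each with a single core $H$.

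For the ``Furthermore'' in case $(a)$, let $H'\subset M^n$ be embedded, compact, flat and totally geodesic; we may assume $H'\neq H$, so $H'$ lies in the interior of one cylinder, say $C=(D\times\R^{n-2})/G$. The key structural step is to show that $H'$ is swept out by nullity leaves. By the Gauss equation a flat totally geodesic hypersurface has vanishing ambient sectional curvature on every tangent $2$-plane; but in the product $D\times\R^{n-2}$ a $2$-plane has zero curvature at a point where $K_D>0$ only if its projection to $TD$ is degenerate. Hence at such a point the projection $T_pH'\to T_pD$ has rank $\le 1$, so $T_pH'$ contains the whole nullity space $\R^{n-2}$. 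Thus over the curved part of $D$ the lift $\tilde H'\subset D\times\R^{n-2}$ is a union of leaves $\{p\}\times\R^{n-2}$ and projects to a geodesic $c$ of $D$; totally geodesic rigidity (a totally geodesic submanifold is determined by its tangent space at one point) propagates this across the flat collar, giving $\tilde H'=c\times\R^{n-2}$ for a complete geodesic $c$. Compactness and embeddedness of $H'$ then force $c$ to be a simple closed geodesic of $D$ (or $c=\partial D$, i.e. $H'=H$).

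Now Gauss--Bonnet finishes matters. Since $D$ is a disk with geodesic boundary, $\int_D K=2\pi$; the subdisk cut off by the simple closed geodesic $c$ again has $\int K=2\pi$, so the annulus $A$ between $c$ and $\partial D$ satisfies $\int_A K=0$ and, as $K\ge 0$, is flat. This realizes an isometric flat product $H\times[0,a]$ with $H=H\times\{0\}$ and $H'=H\times\{a\}$. Absorbing this collar into the opposite cylinder $C'$ (as in \rref{noflat}) exhibits $H'$ as the common boundary of two twisted cylinders over disks, hence as a core; since $H'\cong H$, all cores are isometric. In case $(b)$ any core is one-sided, for a two-sided core would place $M^n$ in case $(a)$; but the interior parallel hypersurfaces $H\times\{a\}$ of the single cylinder $C$ are all two-sided, so no one-sided compact totally geodesic hypersurface sits in its interior and the unique core is $H=\sigma(\partial C)$. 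The main obstacle is the structural step of the previous paragraph — excluding the possibility that $H'$ threads the curved region along some exotic totally geodesic sheet; the nullity-containment argument together with totally geodesic rigidity rules this out, after which the Gauss--Bonnet conclusion is immediate.
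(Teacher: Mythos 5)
Your dichotomy paragraph and your Gauss--Bonnet endgame match the paper, but the structural middle of your argument has a genuine gap, and it is exactly the case the paper spends the first half of its proof on. You write ``we may assume $H'\neq H$, so $H'$ lies in the interior of one cylinder.'' This is a non sequitur: two distinct embedded compact hypersurfaces can perfectly well intersect, so $H'\neq H$ does not give $H'\cap H=\emptyset$. The paper treats $H\cap H'\neq\emptyset$ separately: at $p\in H\cap H'$ it shows that the boundary nullity leaf of $C$ at $p$ must be contained in $H'$ (otherwise, by completeness, the projection of $\pi^{-1}(H'\cap C)$ onto $\su$ would be a surjective submersion, forcing $\su$ and hence $C$ to be flat), and likewise for $C'$; since these two codimension-two leaves are distinct, they span $T_pH$, so $T_pH'=T_pH$ and $H'=H$. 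Your own machinery (nullity containment plus a limiting argument at $p$) could be adapted to close this case, but as written your case analysis is incomplete, and the same omission recurs in your treatment of case $(b)$, where in addition the opening claim ``a two-sided core would place $M^n$ in case $(a)$'' is circular: the mutual exclusivity of the two cases for a given $M^n$ is part of what is being proved.

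There is a second gap in the key structural step even when $H'\subset C$. Your pointwise Gauss-equation argument yields $T_pH'\supset\R^{n-2}$ only at points lying over $\{K_D>0\}$, and your ``totally geodesic rigidity propagates across the flat collar'' needs at least one such point to start from. But the definition of a \ggm does not require the generating surfaces to be nonflat away from the boundary (the curvature even vanishes to infinite order along $\partial D$, so a flat collar of positive width is allowed), and then $H'$ could a priori lie entirely over the flat part of $D$, where a flat totally geodesic hypersurface need not locally be a union of nullity leaves (any affine hyperplane in $\tilde D_{\rm flat}\times\R^{n-2}$ qualifies). One must then invoke compactness of $H'$ (a compact totally geodesic flat hypersurface in the flat thickened torus must be a parallel torus), or argue globally as the paper does: if $\tilde H'$ fails to contain the nullity factor at a \emph{single} point, then by completeness and total geodesy its projection onto $\su$ is a surjective submersion, and the Gauss equation forces \emph{all} of $\su$ to be flat, contradicting nonflatness of the cylinder. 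This global dichotomy is what makes the paper's proof uniform in both the intersecting and the disjoint cases; your local argument leaves the all-flat configuration unaddressed.
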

\proof
We only need to prove the uniqueness of the cores. In order to do
this, any limit of nullity leaves of $C$ at its boundary in $M^n$
will be called a boundary nullity leaf, or BNL for short.

For case $(a)$, first assume that $H\cap H'\neq\emptyset$ and take
$p\in H\cap H'$. Then a BNL of $C$ in $H$ at $p$ is contained in $H'$.
Indeed if not, the product structure of the universal cover
$\pi:\tilde C=\su\times\R^{n-2}\to C$, together with the fact that $H'$
is flat totally geodesic and complete and intersects $H$ transversely,
would imply that $L^2$, and hence $C$, is flat since by dimension reasons
the projection of $\pi^{-1}(H'\cap C)$ onto $L^2$ would be a surjective
submersion. Analogously, the (distinct) BNL of $C'$ at $p$ lies in $H'$,
and since $H$ is the unique hypersurface containing both BNL's, we have
that $H=H'$. If, on the other hand, $H\cap H'=\emptyset$, we can assume
$H'\subset C=(L^2\times\R^{n-2})/G$. Again by the product structure of
$\tilde C$ and the fact that $H'$ is embedded we see that
$H'=(\gamma'\times\R^{n-2})/G'$ where $\gamma'\subset L^2$ is a simple
closed geodesic and $G'\subset G$ the subgroup preserving $\gamma'$.
Since the boundary $\gamma$ of $L^2$ is also a closed geodesic and $L^2$
is a 2-disk with nonnegative Gaussian curvature, by Gauss--Bonnet there
is a closed interval $I=[0,a]\subset \R$ such that the flat strip
$\gamma\times I$ is contained in $L^2$, with $\gamma=\gamma\times\{0\}$
and $\gamma'=\gamma\times \{a\}$. Thus $G'$ acts trivially on $I$, which
implies our claim.

In case $(b)$ we have that $H\cap H'=\emptyset$ as in case $(a)$ since at
any point $p\in H$ we have two different BNL's at $\sigma^{-1}(p)$. Hence
as before $H'=(\gamma'\times\R^{n-2})/G'\subset C$ and
$H\times[0,a]\subset M^n$, with $H=H\times \{0\}$ and $H'=H\times \{a\}$.
But then the normal bundle of $H'$ is trivial, contradicting the fact
that $H$ is one-sided.
\qed

\begin{rem}\label{2fold}
{\rm Any manifold in case $(b)$ admits a two-fold cover whose covering
metric is as in case $(a)$. Indeed, we can attach to~$C$ another copy of
$C$ along its interior boundary $\partial_i C$ using the involution that
generates $\Z_2$. Switching the two cylinders induces the two-fold cover
of $M^n$.}
\end{rem}

\vspace{6pt}

We proceed by showing that our \ggms are essentially 3-dimensional.
Observe that we only use here that $M^n\setminus W$ is connected, with no
curvature assumptions. In fact, the same proof shows that if
$M^n\setminus W$ has $k$ connected components, then $M^n$ splits off an
$(n-k-2)$-dimensional Euclidean factor.

\medskip

{\it Claim. If $n>3$, the universal cover of $M^n$ splits off an
$(n-3)$-dimensional Euclidean factor.}
\proof
Assume first that $M^n$ is the union of two cylinders $C$ and $C'$ with
common boundary~$H$. Consider the nullity distributions $\Gamma$ and
$\Gamma'$ on the interior of $C$ and $C'$, which extend
uniquely to parallel codimension one distributions $F$ and $F'$ on
$H$, respectively. Recall that $F\ne F'$ since otherwise the
universal cover is an isometric product $N^2\times\R^{n-2}$. So
$J:=F\cap F'$ is a codimension two parallel distribution on $H$.
We claim that $J$ extends to a parallel distribution on the interior of
both $C$ and $C'$.

To see this, we only need to argue for $C$, so lift the
distributions $J$ and $F$ to the cover $S^1\times \R^{n-2}$ of $H$
under the projection $\pi\colon \tcyl\to C=\cyl$, and
denote these lifts by $\hat J$ and $\hat F$. They are again
parallel distributions whose leaves project to those of $J$ and $F$
under $\pi$. At a point $(x_0,v_0)\in S^1\times \R^{n-2}$ a leaf of
$\hat F$ is given by $\{x_0\}\times \R^{n-2}$ and hence a leaf of
$\hat J$ by $\{x_0\}\times W$ for some affine hyperplane
$W\subset\R^{n-2}$. Since $\hat J$ is parallel, any other leaf is given
by $\{x\}\times W$ for $x\in S^1$. Since $G$ permutes the leaves of
$\hat F$, $W$ is invariant under the projection of $G$ into
$\Iso(\R^{n-2})$. Therefore $\pi(\{x\}\times W)$ for $x\in\su$ are the
leaves of a parallel distribution on the interior of $C$, restricting to
$J$ on its boundary.

Therefore, we have a global flat parallel distribution $J$ of codimension
three on $M^n$, which implies that the universal cover splits
isometrically as $N^3\times \R^{n-3}$.

Now, if $M^n$ consists of only one open cylinder $C$ and its one-sided
boundary, by \rref{2fold} there is a two-fold cover $\hat M^n$
of $M^n$ which is the union of two cylinders as above and whose universal
cover splits an $(n-3)$-dimensional Euclidean factor.
\qed
\vspace{1.5ex}

We can now finish the proof of \tref{nonneg}. Since $M^n$ is compact with
nonnegative curvature, the splitting theorem implies that the universal
cover splits isometrically as $\tilde M^n=Q^k\times \R^{n-k}$ with $Q^k$
compact and simply connected. According to the above claim, $k=2$ and
hence $Q^2\simeq\Sph^2$, or $k=3$ and by Theorem 1.2 in \cite{h} we have
$Q^3\simeq \Sph^3$. In the latter case, we claim that the metric on
$\Sph^3$ is again a \ggm metric. Indeed, if $\sigma\colon \Sph^3\times
\R^{n-3} \to M^n$ is the covering map, and $C\subset M^n$ a twisted
cylinder, then in $C'=\sigma^{-1}(C)$ the codimension $2$ nullity leaves
contain the $\R^{n-3}$ factor. Since the universal cover of $C'$
has the form $\su\times \R^{n-2}$, the metric on $\Sph^3$ must be a
\ggm metric.

\smallskip

\section{Geometric graph 3-manifolds with nonnegative curvature}\label{lenss}

\smallskip

In this section we classify 3-dimensional \ggms with nonnegative scalar
curvature, giving an explicit construction of all of them. As a
consequence, we show that, for each lens space, the number of connected
components of the moduli space of such metrics is infinite, while for
each prism manifold, the moduli space is connected. Recall that we assume
that $M^3$ itself is not a single twisted cylinder. Furthermore, none of
the twisted cylinders are flat, hence their generating surfaces are discs
and $M^3$ is the union of one or two twisted cylinders according to the
dichotomy in \cref{twopos}.

\medskip

Let $M^3$ be such a compact \ggm with nonnegative scalar curvature. We
first observe that $M^3$ is orientable. Indeed, by \tref{nonneg},
$M^3=\Sph^3/\Pi$ for some finite group $\Pi$ acting freely.
Moreover, if an element $g\in\Pi$ reverses orientation,
the Lefschetz fixed point theorem implies that $g$ has a fixed point.
Thus every cylinder $C=(D\times \R)/G$ is orientable as well, i.e. the
action of $G$ preserves orientation.

\smallskip

For $g\in G$, we write $g=(g_1,g_2)\in\Iso(D\times\R)$. Thus $g_1$
preserves the closed geodesic $\partial D$ and fixes the soul point
$x_0\in D$. If $g\neq e$ and $g_1$ reverses orientation, then so does
$g_2$ and hence~$g$ would have a fixed point. Thus $g_2$ preserves
orientation and is a translation which is nontrivial since $g_1$ has a
fixed point. This easily implies that $G= \Z$. Altogether, the twisted
cylinders are of the form $C=(D\times\R)/\Z$ with $\Z$ generated by some
$g=(g_1,g_2)$. If $g_1$ is nontrivial, then $g_1$ is determined by its
derivative at $x_0$. After orienting $D$, $d(g_1)_{x_0}$ is a rotation
$R_{\theta}$ of angle $2\pi\theta$, $0\leq \theta<1$. We simply say that
$g_1$ acts as a rotation $R_{\theta}$ on~$D$. Thus $g$ acts via
\begin{equation}\label{g} g(x,s)=(R_{\theta}(x),s+h)\in\Iso(D\times\R),
\end{equation} for a certain constant $h>0$, after orienting the nullity
distribution $\Gamma\cong T^\perp D$. We can regard $\theta$ as the twist
of the cylinder and $h$ as its height; see Figure~3.
These, together with the length of
$\partial D$, are the geometric invariants that characterize the twisted
cylinder up to isometry. Moreover, $C$ has a parallel foliation by the
nullity lines, i.e. the images of $\{p_0\}\times\R, p_0\in D$, which are
closed if and only if $\theta$ is rational. The interior boundary of $C$
is a flat 2-torus and the limits of the nullity lines induce a parallel
foliation on $\partial_iC$. Observe that $\partial_iC$ also has a
parallel foliation by closed geodesics given by the projection of
$\partial D\times \{s_0\}$, $s_0\in\R$, which will be denoted $\Fol(C)$.

\begin{figure}[!ht]
\centering
\includegraphics[width=0.3\textwidth]{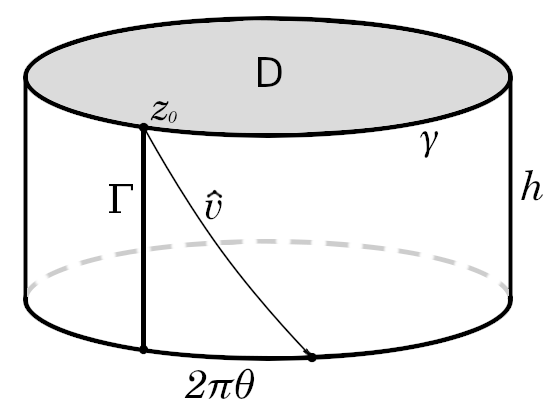}
\caption{\small A twisted cylinder}
\end{figure}

Notice that the action of $\Z$ can be changed
differentiably until $\theta=0$, and hence $C$ is diffeomorphic to a
solid torus $D\times S^1$. According to \cref{twopos}, $M^3$ is thus
either the union of two solid tori glued along their boundary, and hence
diffeomorphic to a lens space, or it is a solid torus whose boundary is
identified via an involution to form a Klein bottle, and therefore
diffeomorphic to a prism manifold.

\begin{rem}\label{orient}
{\rm Let us clarify the role of the orientations in our description of
$C$ in \eqref{g}. Take a twisted cylinder $C$ with nonnegative scalar
curvature, and $D$ a maximal leaf of $\Gamma^\perp$. Orienting $\Gamma$
is then equivalent to orienting $T^\perp D$, which in turn is equivalent
to choosing one of the two generators of $\Z$. On the other hand,
orienting $D$ is equivalent to choosing between the oriented angle
$\theta$ above or $1-\theta$. In particular, these orientations are
unrelated to the metric on $C$, i.e., changing orientations give
isometric cylinders.}
\end{rem}

Next, we show that the \ggm metric on $M^3$ is isotopic to a standard
one. In order to do this, fix once and for all a metric $\met_0$ on the
disc \mbox{$D_0\!=\!\{x\!\in\!\R^2\!:\! |x|\le 1\}$} which is rotationally
symmetric, has positive Gaussian curvature on the interior of $D_0$, and
whose boundary is a closed geodesic of length $1$ along which the
Gaussian curvature vanishes to infinite order. We call the metric on
$M^3$ {\it standard}, if for each twisted cylinder $C=(D\times\R)/\Z$ in
the complement of a core of $M^3$, the metric on $D$ is isometric to
$r^2\met_0$ for some constant $r>0$. Notice that such a metric on $M^3$
is unique up to isometry. For this we first show:

\begin{lem}\label{def}
Let $\met$ be a metric on a disc $D$ with nonnegative Gaussian curvature.
Assume that its boundary is a closed geodesic along which the curvature
vanishes to infinite order, and that the metric is invariant under a
group of isometries $\g$. Then, given a constant $r>0$, there exists a
smooth path of metrics on $D$, $\met_s,\ 1\le s\le 2,$ satisfying
the same assumptions for all $s$, such that $\met_1=\met$ and
$\met_2=r^2\met_0$, where $\met_0$ is the fixed rotationally symmetric
metric on $D_0$.
\end{lem}
\begin{proof}

Let $\met'$ be the standard flat metric on $D_0$.
By the uniformization theorem we can write $\met=f_1^*(e^{2v}\met')$ for
some diffeomorphism $f_1:D\to D_0$ and a smooth function $v$ on $D_0$. The
metric $e^{2v}\met'$ is thus invariant under $C_{f_1}(\g)=\{f_1\circ
g\circ f_1^{-1}: g\in \g\}$ which fixes $f_1(x_0)$, where $x_0\in D$ is
the fixed point of the action of $\g$. Equivalently, $h\in C_{f_1}(\g)$
is a conformal transformation of $(D_0,\met' )$ with conformal factor
$e^{2v-2v\circ h}$. Recall that the conformal transformations of $\met'$
on the interior of $D_0$ can be viewed as the isometry group of the
hyperbolic disc model. Hence there exists a conformal transformation $j$
of $D_0$ with $j(f_1(x_0))=0$ and conformal factor $e^{2\tau}$. We can
thus also write $\met=f^*(e^{2u}\met')$, where $f=j\circ f_1:D\to D_0$
and $u:=(v-\tau)\circ j$. Now the metric $e^{2u}\met'$ is invariant under
$C_f(\g)$, which this time fixes the origin of $D_0$. So $k\in C_f(\g)$
is a conformal transformation of $\met'$ fixing the origin, with
conformal factor $e^{2u-2u\circ k}$. But an isometry of the hyperbolic
disc model, fixing the origin, is also an isometry of $\met'$. Hence
$e^{2u}=e^{2u\circ k}$, i.e. $u$ is invariant under $k$. Altogether,
$C_f(\g)\subset \SO(2)\subset\Iso(D_0,\met')$ and $u$ is
$C_f(\g)$-invariant. Analogously, $r^2\met_0=f_0^*(e^{2u_0}\met')$ with
$f_0\in \Diff(D_0)$ satisfying $f_0(0)=0$ and $u_0$ being
$\SO(2)$-invariant. In particular, $u_0$ is also $C_f(\g)$-invariant.

We now consider the two metrics $e^{2u}\met'$ and $e^{2u_0}\met'$ on
$D_0$. They both have the property that the boundary is a closed geodesic
along which the curvature vanishes to infinite order. An easy computation
shows that the assumption that the boundary is a closed geodesic, up to
parametrization, is equivalent to the condition that the normal
derivatives of $u$ and $u_0$, with respect to a unit normal vector in
$\met'$, is equal to~$1$. Furthermore, since the curvature $G$ of a
metric $e^{2w}\met'$ is given by $Ge^{2w}=-\Delta w$, $G$ vanishes to
infinite order if and only if $\Delta w$ does.
For each $0\leq s\leq 1$, consider the $C_f(\g)$-invariant metric on
$D_0$ given by $\met^s=e^{2(1-s) u_0 +2s u+a(s)}\met'$, where $a(s)$ is
the function that makes the boundary to have length $r$ for all $s$.
Clearly, for each $s$, the boundary is again a closed geodesic up to
parametrization and $G^s$ vanishes at the boundary to infinite order.
Furthermore, since $G^s e^{2(1-s) u_0 +2s u+a(s)}=-(1-s)\Delta u_0
-s\Delta u$ and $\Delta u_0<0,\ \Delta u\leq 0$, the curvature of
$\met^s$ is nonnegative and positive on the interior of $D_0$. Thus
$\met_s=f^*\met^s$ is the desired family of metrics on $D$.
\end{proof}

We can now apply this to deform the metric on $M^3$:

\begin{prop}\label{std}
A \ggm metric with nonnegative scalar curvature is isotopic, through
\ggm metrics with nonnegative scalar curvature, to a standard one.
\end{prop}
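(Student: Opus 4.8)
The plan is to reduce the whole statement to a single deformation of the metric on the generating disks, carried out with all gluing data held fixed. By \cref{twopos} and the ensuing discussion, $M^3$ is assembled from one or two twisted cylinders $C=(D\times\R)/\Z$ whose generator acts by $g(x,s)=(R_\theta(x),s+t)$, glued along the flat torus $\partial C=\R^2/\Lambda$, where $\Lambda$ is generated by $(\ell,0)$ and $(\theta\ell,t)$ and $\ell$ is the length of the closed geodesic $\partial D$. The key observation is that this boundary torus, and hence the isometry realizing the gluing, depends only on $(\ell,\theta,t)$ and not on the interior metric of $D$. I would therefore keep $(\ell,\theta,t)$ fixed (together with the analogous data of $C'$ in case $(a)$, and the involution defining the Klein bottle in case $(b)$); any deformation of the interior metric of $D$ that preserves $\ell$ then leaves the boundary torus and all identifications untouched, so $M^3$ stays a \ggm. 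Since $C$ carries locally the product metric of $D$ with the Euclidean factor, its scalar curvature equals $2K$, and nonnegative scalar curvature of $M^3$ is equivalent to $K\ge0$ on each disk. Thus it suffices to deform the metric of each $D$, through metrics with $K\ge0$, geodesic boundary of fixed length $\ell$, and $K$ vanishing to infinite order along $\partial D$ (the conditions of \pref{cyl} and \rref{ext}), to the standard rotationally symmetric metric $\ell^2\met_0$. One constraint must be respected throughout: since $R_\theta$ must act isometrically on $(D,g)$, the metric is invariant under the rotation group it generates about the soul point --- all of $\SO(2)$ if $\theta$ is irrational, a cyclic group $\Z_m$ if $\theta=k/m$, and nothing if $\theta=0$ --- and the deformation must stay $R_\theta$-invariant so that $C_s=(D_s\times\R)/\Z$ remains a Riemannian manifold.

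The rotationally symmetric case is the easy half. Writing such a metric in geodesic polar coordinates as $dr^2+\phi(r)^2 d\varphi^2$, the requirements translate into $\phi(0)=0$, $\phi'(0)=1$, $\phi$ concave (which is exactly $K=-\phi''/\phi\ge0$), $\phi'(L)=0$ with $2\pi\phi(L)=\ell$, and $\phi''$ vanishing to infinite order at $r=L$. After normalizing the radius $L$, these conditions are all preserved under affine interpolation --- a convex combination of concave functions is concave, and the remaining constraints are affine --- so $\phi_s=(1-s)\phi_0+s\phi_1$ connects any two admissible rotationally symmetric metrics, and in particular connects every rotationally symmetric disk to $\ell^2\met_0$.

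The hard part, and the heart of the argument, is to connect a general $R_\theta$-invariant disk to a rotationally symmetric one through $R_\theta$-invariant metrics keeping $K\ge0$ and the boundary condition. My plan is to recast this as an interpolation of convex surfaces. First reduce to metrics that are genuinely flat on a fixed collar $\partial D\times[0,\delta]$: because $g$ is flat to infinite order along $\partial D$, its curvature can be damped to zero on a collar monotonically, keeping $K\ge0$ and $R_\theta$-invariance, which automatically preserves the boundary condition. Doubling $D$ across $\partial D$ then yields a smooth metric on $\Sph^2$ with $K\ge0$, an isometric reflection fixing $\partial D$, and the residual $\SO(2)$- or $\Z_m$-symmetry; by Alexandrov's theorem this is the boundary of a mirror-symmetric convex body that is a right cylinder near the mirror plane. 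Working with support functions, I would interpolate by averaging over the rotation axis: replacing the support function by its convolution against measures on $\SO(2)$ running from the Dirac mass to Haar measure keeps it a support function (hence convex), preserves mirror symmetry and the cylindrical behaviour near the mirror plane, stays $\Z_m$-invariant when the measures are chosen invariant, and ends at a body of revolution. Since perimeter is linear under Minkowski combination, the length of the fixed geodesic stays equal to $\ell$ along the whole path. Descending to the quotient disk gives the desired symmetrization, and composing it with the profile interpolation above --- simultaneously on $C$ and $C'$ in case $(a)$, and on the single $C$ in case $(b)$ --- produces the required isotopy to a standard metric.

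The step I expect to fight with is exactly this symmetrization, for two reasons: the convolution must be performed equivariantly while keeping the induced metric smooth (support functions of our bodies fail to be of class $C^\infty$ where $K=0$, that is along the flat collar and the flat parts of the cap, so the passage between convex bodies and smooth surfaces needs care), and one must verify that the resulting disk metrics depend continuously on the parameter in the $C^\infty$ topology. The flat-collar reduction of the first step, and the continuity of the smoothing, are the two places where the routine-looking claims carry genuine content.
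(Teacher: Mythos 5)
Your global reduction---fix the gluing data $(\ell,\theta,t)$, note that scalar curvature of $M^3$ is $2K$ of the disks, and deform each disk through $R_\theta$-invariant metrics with geodesic boundary of length $\ell$ and $K\ge 0$ vanishing to infinite order at $\partial D$---is exactly the paper's framework, and your affine interpolation of profile functions in the rotationally symmetric case is correct. But the two steps that carry the general case both have genuine gaps. First, the collar ``damping'' is not a legitimate operation: by Gauss--Bonnet, \emph{every} metric on a disk with geodesic boundary has total curvature exactly $2\pi$, so you cannot monotonically push $K$ to zero on a collar while keeping $K\ge 0$ and leaving the metric unchanged elsewhere; the curvature you remove must reappear somewhere, i.e.\ this is a global prescribed-curvature problem, not a pointwise cutoff of $K$. (What does work is \emph{inserting} a genuinely flat collar $\partial D\times[0,T]$, which is smooth by \rref{ext} and is itself an isotopy through \ggm metrics.) Second, and more seriously, the convex-body symmetrization is not known to produce a path of \emph{smooth} metrics, which is what the statement requires. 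Alexandrov's realization of the doubled metric is only known to be smooth where $K>0$ (Pogorelov's regularity fails precisely along the flat collar and any flat spots), so even the starting body need not have smooth boundary, nor need its intrinsically flat collar be an extrinsic right cylinder without further argument; and Minkowski combination does not preserve boundary regularity (Kiselman: even real-analytic plane summands can produce a sum whose boundary is not $C^3$). Hence the interpolated bodies only give Alexandrov metrics with $K\ge0$ in a weak sense: smoothness of each metric, continuity of the family in the $C^\infty$ topology, and the infinite-order vanishing of $K$ along $\partial D$---all needed for ``isotopic through \ggm metrics with nonnegative scalar curvature''---are exactly what is missing. You flag this as a technical worry, but it is the heart of the matter, and no mechanism in the proposal addresses it.

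The paper avoids symmetrization entirely by a linearizing device you missed: uniformization. Write the disk metric as $f^*(e^{2u}\met')$ with $\met'$ the flat metric on $D_0$; after composing $f$ with a M\"obius transformation taking the fixed point of the deck rotation to the origin, the conjugated group $C_f(G_1)$ consists of conformal maps fixing $0$, hence lies in $\SO(2)$, so both $u$ and the conformal factor $u_0$ of the standard metric $r^2\met_0$ are invariant under it. Since $Ke^{2u}=-\Delta u$, nonnegativity of the curvature, the geodesic boundary condition (a normal-derivative condition on $u$), and infinite-order flatness at the boundary are all affine conditions in the conformal factor. Therefore the straight line $e^{2(1-s)u_0+2su+a(s)}\met'$, with $a(s)$ a constant normalizing the boundary length, is already the desired isotopy: $K^s e^{2(1-s)u_0+2su+a(s)}=-(1-s)\Delta u_0-s\Delta u\ge 0$. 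In other words, in conformal coordinates the general case is no harder than your rotationally symmetric one---your profile interpolation is the analogue, in the symmetric setting, of this convex combination---and all the regularity questions that sink the convex-body route never arise.
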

\begin{proof}
We define the isotopy separately on each cylinder $C=(D\times\R)/\Z$,
such that the isometry type of the core $H=\partial C$, and the foliation
of $H$ induced by the nullity leaves of $C$, stays fixed. The metric on
$D$ is invariant under the group of isometries $\g=\{g_1\mid (g_1,g_2)\in
\Z\}$ and we apply \lref{def} to obtain a family of metrics $\met_s+
dt^2$ on $D\times \R$, which is invariant under the action of $\Z$. We now
glue the induced metrics on $(D\times\R)/\Z$ to the core $H$ and choose
$r$ such that the arc length parametrization of $\partial C$ and nullity
leaves in $H$ match. Performing this process on each cylinder, we obtain
the desired deformation of the metric on $M^3$.
\end{proof}

We now discuss how $C$ induces a natural marking on its interior boundary
$\partial_iC$. For this, let us first recall some elementary facts about
lattices $\Lambda\subset\R^2$, where we assume that the orientation on
$\R^2$ is fixed.

\begin{definition}
A {\it marking} of the lattice $\Lambda$ is a choice of
an oriented basis $\{v,\hat v\}$ of $\Lambda$, and we say that
the marking is {\it normalized} if
$$
\la v,\hat v\ra/\|v\|^2\in [0,1).
$$
\end{definition}
Notice that for any primitive $v\in \Lambda$, i.e. $tv\not\in\Lambda$ for
$0<t<1$, there exists a unique
oriented normalized marking $\{v,\hat v\}$ of $\Lambda$. Indeed, if
$\{v, w\}$ is some oriented basis of $\Lambda$, then
$\la v,w+n v\ra/\|v\|^2=\la v,w\ra/\|v\|^2+n$ and hence there exists a
unique $n\in\Z$ such that $\{v,\hat v\}$ with $\hat v=w+nv$ is
normalized.

If $T^2$ is an oriented flat torus and $z_0\in T^2$ a base point, then
$T^2=T_{z_0}T^2/\Lambda$ where $\Lambda$ is the lattice given by
$\Lambda = \{w \in T_{z_0}T^2 : exp_{z_0}(w) = z_0 \}$.
A (normalized) marking of $T^2$ is a
(normalized) marking of its lattice $\Lambda$.

\smallskip

Now consider an oriented twisted cylinder $C=(D\times\R)/\Z$ with its
standard metric, where the action of $\Z$ is given by \eqref{g} for some
$\theta$ and $h$. The
totally geodesic flat torus $T^2=\partial_i C$, which inherits an
orientation from $C$, has a natural marking based at $z_0=[(p_0,s_0)]$.
For this, denote by $\gamma:[0,1]\to\partial D$ the simple closed
geodesic  with $\gamma(0)=p_0$ which
follows the orientation of $D=[D\times\{s_0\}]\subset C$.
Then, since $\theta\in [0,1)$, we have that
$$
\M(\gamma):=\{v,\hat v\}, \text{\ \ where \ \ }
v=\gamma'(0) \text{\ \ \ and \ \ }
\hat v=\theta v+h\partial/\partial s,
$$
is a normalized marking of $T^2$ based at $z_0$; see Figure~3. Notice
that the geodesic $\sigma(s)=\exp(s\hat v), 0\le s\le 1$, is simple and
closed with length $\|\hat v\|$. Recall that $\Fol(C)$ denotes the
foliation of $T^2$ by parallel closed geodesics $[\gamma\times \{s\}]$,
$s\in[0,h)$.

\smallskip

It is important for us that the above process can be reversed
for standard metrics:

\begin{prop}\label{deterbo}
Let $T^2$ be a flat oriented torus and $\Fol$ an oriented foliation of
$T^2$ by parallel closed simple geodesics. Then there exists an
oriented twisted cylinder $C_\Fol=(D\times\R)/\Z$ over a standard
oriented disk $D$, unique up to isometry, such that $\partial_iC_\Fol=
T^2$ and $\Fol(C_\Fol)=\Fol$. Moreover, different orientations induce
isometric metrics.
\end{prop}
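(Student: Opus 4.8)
The plan is to reverse the assignment $C\mapsto(\partial_i C,\Fol(C))$ described in the two paragraphs preceding the statement: the data $(T^2,\Fol)$ will determine a normalized marking of the underlying lattice, and this marking will in turn pin down the radius of the standard disk $D$ together with the isometry type of the $\Z$-action in \eqref{g}. Concretely, write $T^2=\R^2/\Lambda$ with the fixed orientation. A leaf of $\Fol$, being a simple closed geodesic, is the image of a straight line in the direction of a primitive vector of $\Lambda$; the orientation of $\Fol$ singles this out as an oriented primitive vector $v\in\Lambda$, and the common length of the leaves equals $\|v\|=:r$. By the uniqueness of normalized markings established just before the proposition, there is a unique $\hat v\in\Lambda$ with $\{v,\hat v\}$ an oriented normalized marking; set $\theta=\la v,\hat v\ra/\|v\|^2\in[0,1)$ and $t=\|\hat v-\theta v\|$, the length of the component of $\hat v$ orthogonal to $v$, which is positive since $v,\hat v$ are independent.

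Next I would build the cylinder from this data. Let $D$ carry the standard metric $r^2\met_0$, so that $\partial D$ is a closed geodesic of length $r$, and let $\Z$ act on $D\times\R$ through the generator $g(x,s)=(R_\theta(x),s+t)$ as in \eqref{g}. This is exactly a twisted cylinder $C_\Fol=(D\times\R)/\Z$ of the type produced by \pref{cyl}, with interior boundary the flat torus $(\partial D\times\R)/\Z$. To identify this boundary with $T^2$ I would pass to universal covers: in arc-length coordinates $(u,s)$ on $\partial D\times\R$, the deck group of $(\partial D\times\R)/\Z$ over $\R^2$ is the lattice generated by $(r,0)$ and $(\theta r,t)$, whose Gram matrix $\left(\begin{smallmatrix}r^2 & \theta r^2\\ \theta r^2 & \theta^2r^2+t^2\end{smallmatrix}\right)$ coincides with that of the oriented basis $\{v,\hat v\}$ of $\Lambda$. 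The orientation-preserving linear isometry of $\R^2$ matching these two positively oriented bases (both determinants are positive) then descends to an orientation-preserving isometry $T^2\cong\partial C_\Fol$ carrying the direction of $v$, i.e. $\Fol$, onto the image of $\partial D\times\{s\}$, i.e. $\Fol(C_\Fol)$. This establishes existence.

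For uniqueness, I would observe that the radius $r$ is forced as the common leaf length, so the standard disk $D$ is determined up to isometry; and the pair $(\theta,t)$ is forced by the uniqueness of the normalized marking attached to $v$. Hence the generator in \eqref{g}, and therefore $C_\Fol$, is determined up to isometry. The final assertion that different orientations give isometric metrics follows from \rref{orient}: reversing the orientation of $\Fol$ replaces $v$ by $-v$ and, as one checks directly, leaves the triple $(r,\theta,t)$ unchanged, while reversing the orientation of $T^2$ replaces $\theta$ by $1-\theta$, which by \rref{orient} produces an isometric cylinder with the opposite orientation of $D$.

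The one step demanding genuine care is this orientation bookkeeping, together with verifying that the boundary identification $T^2\cong\partial C_\Fol$ is orientation- and foliation-preserving and not merely an abstract isometry of flat tori; matching Gram matrices of oriented bases is precisely what guarantees both. Everything else reduces to the uniqueness of normalized markings and to the smooth-attachment description of standard disks in \rref{ext}, and is routine.
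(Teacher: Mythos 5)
Your proposal is correct and follows essentially the same route as the paper: extract the normalized marking $\{v,\hat v\}$ from a leaf of $\Fol$, read off $(r,\theta,t)$, build $C_\Fol=(D_0\times\R)/\la g\ra$ with $g(p,s)=(R_\theta(p),s+t)$ attached to the standard disk, and settle uniqueness and the orientation statement via the uniqueness of normalized markings and \rref{orient}. The only cosmetic difference is that you identify $\partial C_\Fol\cong T^2$ by matching Gram matrices of the two oriented lattice bases, whereas the paper writes $T^2=(S^1_r\times\R)/\la g\ra$ directly in the orthonormal frame $e_1=v/r$, $e_2=(\hat v-\theta v)/t$ --- the same computation.
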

\proof
Choose $\gamma\in\Fol$, and set $z_0=\gamma(0)$ and $v=\gamma'(0)$. By the above, there exists a unique vector $\hat v$ such that
$\M(\gamma)=\{v,\hat v\}$ is a normalized marking of $T^2$ based at
$z_0$. Set $r=\|v\|$, $\theta = \la v,\hat v\ra/\|v\|^2$
and $h=\|\hat v-\theta v\|$. With respect to the oriented orthonormal
basis $e_1=v/r$, $e_2=(\hat v-\theta v)/h$ of $T_{z_0}T^2$ we have
$$
T^2=\R^2/\Lambda = (\R\oplus\R)/(\Z v\oplus\Z \hat v)
= (S^1_r\times\R)/\Z\hat v,
$$
where $S^1_r$ is the oriented circle of length $r$. Since $v=re_1$ and
$\hat v=\theta v +he_2$, we can also write $T^2= (S^1_r\times\R)/\la g
\ra$ where $g(p,s)=(R_\theta (p),s+h)$. Now we simply attach
$(D_0,r^2\met_0)$ to $S^1_r$ preserving orientations to build
$C=(D_0\times\R)/\la g \ra$. Notice that any two base points of $T^2$ are
taken to each other by an orientation preserving isometry of $C$,
restricted to $\partial C=T^2$. Thus the construction is independent of
the choice of $z_0$ and the choice of $\gamma\in\Fol$.
By \rref{orient}, different choices of orientation induce the same metric
on $C$, and hence $C_\Fol$ is unique up to isometry.
\qed

\smallskip

\begin{rem}
If we do not assume that the metric on $C$ is standard, then the
construction of $C_\Fol$ depends on the choice of base point, and one has
to assume that the metric on $D$ is invariant under $R_\theta$, where
$\theta$ is the angle determined by the marking of $T^2$ induced by
$\Fol$.
\end{rem}

We can now easily classify standard \ggm metrics with two-sided core,
proving case $(a)$ of \tref{ls}.

\begin{theorem}\label{lens}
Let $M^3$ be a compact \ggm of nonnegative scalar curvature with
irreducible universal cover, and assume that its core $T^2$ is two-sided.
Then, $M^3=C_1\sqcup T^2\sqcup C_2$, where $C_i=(D_i\times\R)/\Z$
are twisted cylinders over 2-disks that induce two different foliations
$\Fol_i=\Fol(C_i)$ of $T^2$ by parallel closed geodesics, $i=1,2$.

Conversely, given a flat 2-torus $T^2$ with two different foliations
$\Fol_i$ by parallel closed geodesics, there exists a standard \ggm
$M^3=C_1\sqcup T^2\sqcup C_2$ with irreducible universal cover whose core
is $T^2$ and $C_i=C_{\Fol_i}$. Moreover, this data determines the
standard metric up to isometries, i.e., if $h:T^2\to \hat T^2$ is an
isometry between flat tori, then $\hat M^3=\hat C_1\sqcup \hat T^2\sqcup
\hat C_2$ is isometric to $M^3$, where $\hat C_i=C_{h(\Fol_i)}$.
\end{theorem}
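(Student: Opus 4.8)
The plan is to establish the three assertions of the theorem—the structural decomposition, the converse construction, and uniqueness—in turn, each time reducing to results already in place. For the forward direction I would invoke \cref{twopos}$(a)$: since $M^3$ is nonflat with two-sided core $T^2$, it decomposes isometrically as $M^3=C_1\sqcup T^2\sqcup C_2$ with the $C_i$ nonflat twisted cylinders glued along $T^2$. By \pref{cyl} each generating surface is a $2$-disk, so $C_i=(D_i\times\R)/\Z$ as in \eqref{g}, and each induces the parallel foliation $\Fol_i=\Fol(C_i)$ of $T^2$ by the limits of its nullity leaves. The only substantive point is that $\Fol_1\neq\Fol_2$, and this is precisely where irreducibility enters: if the two foliations agreed, then the nullity line fields $\Gamma_1,\Gamma_2$ on $C_1,C_2$ would extend across $T^2$ to a single globally defined parallel distribution, and the splitting argument from the proof of the Claim in \sref{nonnegn} would force the universal cover to split off an $\R$ factor, contradicting irreducibility.

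For the converse, given $(T^2,\Fol_1,\Fol_2)$ with $\Fol_1\neq\Fol_2$, I would apply \pref{deterbo} to each foliation to obtain standard oriented twisted cylinders $C_i=C_{\Fol_i}$ with $\partial C_i=T^2$ and $\Fol(C_i)=\Fol_i$. Gluing the two along their common boundary torus via the identity of $T^2$ yields a closed $3$-manifold $M^3=C_1\sqcup T^2\sqcup C_2$; the metric is smooth across the core because each $C_i$ meets $T^2$ as a totally geodesic flat hypersurface along which the Gaussian curvature vanishes to infinite order (\pref{cyl} and \rref{ext}), so $M^3$ is by construction a standard \ggm of nonnegative scalar curvature. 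Irreducibility of the universal cover follows exactly as before: because $\Fol_1\neq\Fol_2$ the two nullity line fields do not match along $T^2$, so no global parallel distribution exists, the non-splitting argument of \sref{nonnegn} applies, and together with \tref{nonneg} this identifies $\tilde M^3=\Sph^3$.

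For uniqueness, let $h\colon T^2\to\hat T^2$ be an isometry with $h(\Fol_i)=\hat\Fol_i$. The key observation is that the construction in \pref{deterbo} is natural under $h$: choosing $\gamma\in\Fol_i$ and postcomposing with $h$ gives $h\circ\gamma\in\hat\Fol_i$, and $h$ carries the normalized marking $\M(\gamma)$ to $\M(h\circ\gamma)$, preserving the three invariants $r=\|\gamma'\|$, $\theta$, and $t$ that determine the gluing $T^2=(S^1_r\times\R)/\la g\ra$. Since the attached model disk $(D_0,r^2\met_0)$ is the same on both sides, the uniqueness clause of \pref{deterbo} upgrades to an isometry $H_i\colon C_i\to\hat C_i$ whose restriction to the boundary is precisely $h$. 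As $H_1$ and $H_2$ both restrict to $h$ on $T^2$, they glue to a single isometry $M^3\to\hat M^3$, proving that the data $(T^2,\Fol_1,\Fol_2)$ determines the standard metric up to isometry.

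The step I expect to be the main obstacle is the uniqueness argument, specifically arranging the abstract isometry furnished by \pref{deterbo} to restrict to the prescribed $h$ on $T^2$ rather than to some other boundary isometry; this requires unwinding the marking-based construction of $C_\Fol$ and using \rref{orient} to absorb the orientation ambiguities, since \pref{deterbo} only asserts uniqueness up to isometry and identifies orientations that the marking does not see. By comparison, the inequality $\Fol_1\neq\Fol_2$ in the forward direction and the irreducibility in the converse are direct consequences of the splitting dichotomy already developed in \sref{nonnegn}.
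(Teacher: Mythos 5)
Your proposal is correct and follows essentially the same route as the paper: the decomposition comes from \cref{twopos} together with the structure results of \sref{nonnegn} (with irreducibility forcing $\Fol_1\neq\Fol_2$ via the splitting argument), the converse and the uniqueness both come from \pref{deterbo}, and your naturality argument for why the cylinder isometries restrict to $h$ on the core is exactly the detail the paper compresses into ``by uniqueness in \pref{deterbo}\dots which in turn induces an isometry between $M^3$ and $\hat M^3$.'' The only content of the paper's proof you do not reproduce is the observation that the core and foliations are themselves isometry invariants (via density of nonflat points), but that direction is not required by the statement as written.
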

\begin{proof}
We only need to prove uniqueness. The core
of a standard metric is unique since, by the choice of the metric on $D_0$, the set of nonflat points is dense. It is clear then that an
isometry between standard \ggms will send the core to the core, and the
parallel foliations to the parallel foliations. Hence the core and the
parallel foliations are determined by the isometry class of $M^3$.

Conversely, by uniqueness in \pref{deterbo} the standard twisted
cylinders $C_{\Fol_i}$ and $C_{h(\Fol_i)}$ are isometric, which in turn
induces an isometry between $M^3$ and $\hat M^3$. The only ambiguity is
on which side of the torus to attach each of the twisted cylinders,
but this simply gives an orientation reversing isometry fixing the core.
\end{proof}

Now, let us consider the one-sided core case. Here we know that
$M^3=C\sqcup K$ and that $K$ is a nonorientable quotient of the flat
torus $\partial_i C$ and hence a flat Klein bottle. It is easy to see
that, if a flat torus admits an orientation reversing fixed point free
isometric involution~$j$, then $T^2$ has to be isometric to a rectangular
torus $S^1_r\times S^1_s$ on which $j$ acts as in \eqref{pris},
i.e., $j(z,w)=(-z,\bar w)$. Thus,
since the universal cover of $M^3$ is irreducible, $\Fol(C)$ does
not to coincide with one of the two invariant parallel foliations
$\{S^1_r\times \{w\}:w\in S^1_s\}$ and
$\{\{z\}\times S^1_s:z\in S^1_r\}$. We denote the first one by $\Fol(j)$.
\smallskip

As in the proof of \tref{lens}, we conclude:

\begin{theorem}\label{prism}
Let $M^3$ be a compact \ggm of nonnegative scalar curvature with
irreducible universal cover, and assume that its core $K$ is one-sided.
Then $M^3=C\sqcup K$, where $C=(D\times\R)/\Z$ is a
twisted cylinder over a 2-disk with $\partial_i C=T^2$ isometric to a
rectangular torus, and $\partial C=K=T^2/\Z_2$ a flat totally geodesic
Klein bottle.

Conversely, a rectangular flat torus $T^2=S^1_r\times S^1_s$ and a
foliation $\Fol$ of $T^2$ by parallel closed geodesics different from
$S^1_r\times \{p\}$ or $\{p\}\times S^1_s$ define a standard \ggm with
irreducible universal cover $M^3=C_\Fol\sqcup K$ whose core $K$ is
one-sided. Moreover, $T^2$ and $\Fol$ determine $M^3$ up to isometry.
\end{theorem}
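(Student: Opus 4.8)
The plan is to follow the proof of \tref{lens} almost verbatim, replacing the two‑sided gluing along a torus by the one‑sided quotient supplied by \cref{twopos}$(b)$ and \rref{2fold}. For the forward direction I would begin from \cref{twopos}$(b)$: since the core $K$ is one‑sided, $M^3=C\sqcup K$ for a single nonflat twisted cylinder $C=(D\times\R)/\Z$ of the normal form \eqref{g}, and $K=\partial C/\Z_2=T^2/\langle j\rangle$, where $T^2=\partial_i C$ is the flat interior boundary torus and $j$ generates the deck group of the two‑fold cover $\sigma\colon T^2\to K$. Because $K$ is nonorientable and $\sigma$ is a local isometry, $j$ is an orientation‑reversing, fixed point free isometric involution of $T^2$. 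By the elementary observation recorded just before the statement, any flat torus carrying such an involution is rectangular, $T^2\cong S^1_r\times S^1_s$, with $j$ in the normal form of \eqref{pris}; this yields the asserted structure.

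For the converse, given a rectangular torus $T^2=S^1_r\times S^1_s$ and a foliation $\Fol$ by parallel closed geodesics distinct from the two invariant foliations of $j$, I would first invoke \pref{deterbo} to produce the standard twisted cylinder $C_\Fol=(D\times\R)/\Z$ with $\partial C_\Fol=T^2$ and $\Fol(C_\Fol)=\Fol$. Since $j$ is an isometry of $T^2$, gluing $\partial C_\Fol$ to itself via $j$ produces $M^3=C_\Fol\sqcup K$ with $K=T^2/\langle j\rangle$ a flat totally geodesic Klein bottle; concretely, by \rref{2fold} this $M^3$ is the free $\Z_2$‑quotient of the case‑$(a)$ manifold $\tilde M=C_\Fol\cup_j C_\Fol$, whose two cylinders carry the foliations $\Fol$ and $j_*\Fol$. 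Standardness and nonnegativity of the scalar curvature are inherited from $C_\Fol$. The hypothesis on $\Fol$ enters precisely at irreducibility: by the splitting argument in \sref{nonnegn}, $\tilde M$ (equivalently $M^3$) has reducible universal cover exactly when the two boundary nullity foliations coincide; since $j$ sends a parallel foliation of slope $\mu$ to one of slope $-\mu$, this occurs only for the two invariant foliations of $j$, which are excluded.

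For uniqueness I would argue exactly as in \tref{lens}. The core $K$ of a standard metric is intrinsically determined, as the non‑flat locus is dense, cf. \cref{twopos}, so any isometry carries core to core; passing to the orientation double cover recovers $(T^2,j)$ and the boundary foliation $\Fol=\Fol(C)$ up to isometry. By \pref{deterbo} the standard cylinder $C_\Fol$ is then determined up to isometry by $(T^2,\Fol)$, and hence so is $M^3=C_\Fol\sqcup K$. Together with \pref{std}, this also shows every \ggm metric with nonnegative scalar curvature in this case is isotopic, through such metrics, to a standard one.

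I expect the one genuinely delicate point to be the bookkeeping of the involution: verifying that gluing $C_\Fol$ to itself by $j$ really yields a smooth \ggm with one‑sided totally geodesic core (rather than reintroducing fixed points or failing the distinct‑BNL condition of the definition), and checking that the irreducibility criterion phrased in terms of $\Fol(C)$ matches the one phrased in terms of the nullity foliation used in \sref{nonnegn}. This last equivalence holds because on the flat $T^2$ the foliation $\Fol(C)$ and the boundary nullity foliation are orthogonal, and $j$, being an isometry, preserves orthogonality; hence $\Fol=j_*\Fol$ if and only if the two nullity foliations agree.
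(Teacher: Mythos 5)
Your proposal is correct and follows essentially the same route as the paper: \cref{twopos}$(b)$ together with the rectangular-torus observation for the forward direction, and \pref{deterbo}, the gluing by $j$ via \rref{2fold}, and the uniqueness argument of \tref{lens} for the converse. You in fact supply more detail than the paper does --- notably the orthogonality-of-foliations and slope $\mu\mapsto-\mu$ argument identifying irreducibility of the universal cover with $\Fol\neq j_*\Fol$ --- a point the paper merely asserts in the paragraph preceding the theorem before concluding ``as in the proof of \tref{lens}.''
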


We now introduce an isometric invariant of a geometric graph manifold. As
we will see, this invariant determines the diffeomorphism type of the
manifold.

\medskip

For this purpose, we start by defining the slope $\s(\Fol_1,\Fol_2)$
of a foliation $\Fol_2$ by closed simple geodesics of an oriented flat
torus $T^2$ with respect to another such foliation $\Fol_1$.
In order to do this, we first assume that the foliations are oriented.
Fix $z_0\in T^2$, and take $\gamma_i\in\Fol_i$ parametrized over $[0,1]$
such that $\gamma_1(0)=\gamma_2(0)=z_0$. Then $v_i$ is primitive, and as
observed above, there exists a unique $\hat v_i$ such that
$\M(\gamma_i)=\{v_i,\hat v_i\}$ are two normalized markings of $T^2$
based at $z_0$. Since $\SL(2,\Z)$ acts transitively on the set of
oriented bases of a given lattice, there exist coprime integers $p,q$ and
$a,b$ with $bq-ap=1$ such that
\begin{equation}\label{niu}
v_2=qv_1+p\hat v_1,\ \ \hat v_2=av_1+b\hat v_1.
\end{equation}
We also have $p\ne 0$ since $v_1\neq \pm v_2$. Notice that, since
$v_2$ determines $\hat v_2$, the integers $p$~and~$q$ determine $a$ and
$b$.
Observe that $q/p\in\Q$ is independent of the choice of $z_0$
since the foliations are parallel. It does not depend on the orientations
of the foliations either, since $\{-v,-\hat v\}$ is the oriented marking
associated to $-\gamma$. We call
$$\s(\Fol_1,\Fol_2):=q/p$$
the {\it slope}
of $\Fol_2$ with respect to $\Fol_1$. Note though that reversing the
orientation of the torus changes the sign of the slope, since this
corresponds to replacing $\hat v_i$ with $-\hat v_i$. Moreover, since
$v_1=bv_2-p \hat v_2$, we have that $\s(\Fol_2,\Fol_1)=-b/p$.

\medskip

If $M^3=C_1\sqcup T^2\sqcup C_2$ has a two-sided core, a choice of
orientations $\oo=(\oo_M,\oo_T)$ of both $M^3$ and its core $T^2$ orients
the normal bundle of $T^2$. We can thus  choose the order of the two
twisted cylinders $(C_1,C_2)$ by letting $C_1$ be the cylinder containing
the positive direction of the normal bundle. We thus define the {\it
slope} of the lens space as
$$
\s(M^3,\oo)=\s(M^3,(\oo_M,\oo_T)):=\s(\Fol(C_1),\Fol(C_2))\in\Q.
$$
Notice that $\s(M^3,(\oo_M,-\oo_T))=-q/p$ and
$\s(M^3,(-\oo_M,\oo_T))=-b/p$
where $b$ is defined in \eqref{niu}. Since $b=q^{-1} \mod p$, this is
consistent with the fact that $L(p,q)$ and $L(p,q')$ are diffeomorphic if
and only if $q'=\pm q^{\pm1}\!\!\mod p$.

Analogously, if $M^3=C\sqcup K$ has a one-sided core
$K=\partial_iC/\la j \ra$, a choice of an orientation $\oo=\oo_M$
induces an orientation of the torus $\partial_iC$. We call
$\s(M^3,\oo):=\s(\Fol(j),\Fol(C))$ the {\it slope} of the prism manifold,
recalling that $\Fol(j)=\{S^1\times \{w\}: w\in S^1\}$. Here we have
$\s(M^3,-\oo)=-\s(M^3,\oo)$.

Notice that, in either case, the slope of $M^3$ is well
defined even when the \ggm metric is not standard.

\medskip

We now observe:

\begin{prop}
The slope $\s(M^3,\oo)=q/p$ is an oriented isometry invariant of a
\ggm\!\!. Furthermore, the slopes $-q/p$ and $\pm b/p$ are achieved by
changing the orientation on $M^3$ or the core $T^2$. Conversely, any
rational number is the slope of a \ggm\!\!, both on a lens space and on a
prism manifold.
\end{prop}
\begin{proof}
First, assume that $M^3=C_1\sqcup T^2\sqcup C_2$ is a lens space and let
$f\colon M \to M'$ be an orientation preserving isometry. By
\cref{twopos} the core $H$ is unique up to isometry, i.e. there exists a
maximal isometric product $\bar H\times[0,a]\subset M^n$, such that any
$\bar H\times \{s\}$ for $0\le s\le a$ can be regarded as a core, and any
core is of this form. If we choose $H=\bar H\times\{a/2\}$, and similarly
$H'$ for $M'$, then $f$ takes $H$ to $H'$ and by \tref{lens} the isometry
$f|_H$ takes the boundary nullity foliations of $H$ into those of $H'$.
Since we also assume that $f|_H$ is orientation preserving, the slopes of
$M$ and $M'$ are the same. We can argue similarly for a prism manifold,
in which case the core is even unique.

To achieve any slope $q/p$, we can choose the standard basis $e_1,e_2$ of
a product torus $T^2=S^1\times S^1$ and let $v=qe_1+pe_2$. Then there
exists a unique $\hat v$ such that $\{v,\hat v\}$ is a normalized
marking of the torus. This gives rise to two parallel foliations
of $T^2$ with slope $q/p$ and by \tref{lens} they can be realized by a
\ggm metric on a lens space. The same data also gives rise to a prism
manifold by \tref{prism}.
\end{proof}

We are now in position to prove \tref{diffeo} in the introduction, which
states that $\s(M^3,\oo)$ determines the diffeomorphism type of the
manifold.

\medskip

{\it Proof of \tref{diffeo}.}
Recall that the twisted cylinders $C_i$ with invariants $\theta_i,h_i$ as
in \eqref{g} are diffeomorphic to $D_i\times S^1$ by deforming~$\theta_i$
continuously to $0$. For a two-sided core $T^2$, choose
$\gamma_i\in\Fol_i$, and let
$\M(\gamma_i)=\{v_i,\hat v_i\}$ be the normalized markings of $T^2$
defined by $C_i$. Then the natural generators of
$\pi_1(\partial (D_i\times S^1))=\Z\oplus\Z$ are represented by
the simple closed geodesics $\gamma_i$ and $\sigma_i(t)=\exp(t\hat v_i)$,
$0\le t\le 1$, since the marking $\{v_i,\hat v_i\}$ is normalized.
According to the definition of slope,
$v_2=q v_1 +p \hat v_1$ which implies that under the diffeomorphism
from $\partial D_2\times S^1\simeq \partial C_2$ to
$\partial C_1\simeq\partial D_1\times S^1$, the element
$(1,0)\in \pi_1(\partial (D_2\times S^1))$ is taken to
$(q,p)\in \pi_1(\partial (D_1\times S^1))$.
By definition this is the lens space $L(p,q)$; see \sref{prelim}.

To determine the topological type in the one-sided case, we view $M^3$ as
the union of $C$ with the flat twisted cylinder $N^3$ defined in
\eqref{pris}. Then $\partial N^3=T^2$ is a rectangular torus which we
glue to $\partial_i C$. Taking $\e\to0$ (or considering $
T^2\times(0,\e]$ as part of $C$ instead), we obtain $M^3$. We can now use
our second description of prism manifolds in \sref{prelim} and the proof
finishes as in the previous case.
\qed

\smallskip

We finally classify the moduli space of metrics.

\begin{prop}
On a lens space $(L(p,q),\oo)$ the connected components of the moduli
space of \ggm metrics with nonnegative scalar curvature are
pa\-ram\-e\-trized by its slope $q/p\in\Q$, and therefore it has
infinitely many components. On the other hand, on a prism manifold
$P(q,p)$ with $q>1$ the moduli space is connected.
\end{prop}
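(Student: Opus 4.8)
The plan is to translate everything into the language of slopes already developed, and then invoke the rigidity results of the previous theorems together with the known diffeomorphism classification of lens spaces. The key observation is that by \tref{lens}, \tref{prism}, and \pref{std}, every \ggm metric with nonnegative scalar curvature on a fixed manifold $M^3$ is determined up to isometry by the pair $(T^2,\Fol_1,\Fol_2)$ (two-sided case) or $(T^2,\Fol)$ (one-sided case) modulo isometry of the torus, and any such metric can be deformed through \ggm metrics with nonnegative scalar curvature to a standard one. Hence the moduli space of metrics, as a set, is in bijection with the set of isometry classes of such data, and the connected-component structure is governed by which of these data can be joined by a continuous path of standard \ggm metrics.

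First I would treat the lens space case. By the reductions above, I may assume the metric is standard, and by the normalization described just before the corollary, I may assume $T^2$ is the unit square $S^1\times S^1$ with one foliation equal to $S^1\times\{w\}$; the metric is then completely encoded by the second parallel foliation, i.e.\ by its usual slope, which equals the relative slope $[q/p]$. The diffeomorphism type $L(p,q)$ fixes $p$ and fixes $q$ only up to $q\mapsto q^{\pm 1}\!\!\mod p$, so infinitely many distinct rational slopes $q/p$ (all reducing to the given $L(p,q)$ by choosing representatives in the residue class, or by taking slopes of the form $(q+kp)/p$ for the appropriate covering, which yield the same lens space) realize the same diffeomorphism type. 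Crucially, a continuous path of standard metrics gives a continuous path of slopes in $\Q$, but $\Q$ is totally disconnected, so the slope is locally constant along any isotopy of \ggm metrics; thus two standard metrics lie in the same connected component only if they have the same slope $[q/p]$ up to sign. Combined with the bijection above, the components are parametrized up to sign by the slope, of which there are infinitely many realizing $L(p,q)$, giving infinitely many components.

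For the prism case with $m>1$, the same rigidity shows a standard metric is determined by the rectangular torus $T^2=S^1_r\times S^1_s$ together with the foliation $\Fol$, whose relative slope with respect to $\Fol(j)$ is the fixed invariant $[m/n]=\pm m/n$ forced by the diffeomorphism type $P(m,n)$ (here I use that, unlike for lens spaces, the diffeomorphism type of a prism manifold determines and is determined by $(m,n)$, as recorded in \sref{prelim}). Since $m$ and $n$ are now pinned, the only remaining parameters are the continuous moduli of the rectangular torus, namely the ratio $s/r$, together with the continuous choice of the slope direction realizing a fixed rational relative slope; both of these vary in connected parameter spaces, and \pref{std} provides a deformation through admissible metrics realizing any such variation. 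I would therefore produce an explicit path: deform $T^2$ to the unit square (as in the discussion preceding the corollary, preserving the sign of the scalar curvature), and deform the disk metric to the fixed standard one, exhibiting connectedness.

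The main obstacle I anticipate is the prism case, specifically verifying that the continuous deformation can be carried out while preserving nonnegativity of the scalar curvature and the rectangularity of the interior boundary torus throughout. One must check that deforming $s/r$ and the foliation direction does not force the relative slope to jump (it cannot, since the relative slope is a locally constant invariant and must remain $\pm m/n$), and that at every stage the foliation stays distinct from the two invariant foliations $\Fol(j)$ so that the universal cover remains irreducible, as required by \tref{prism}. The lens space half is comparatively routine once one notes the total disconnectedness of $\Q$; the genuine content is the bijection between the moduli space and the slope data, which is exactly what \tref{lens}, \tref{prism}, and \pref{std} have already supplied.
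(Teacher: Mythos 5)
Your proposal is correct and follows essentially the same route as the paper's proof: deform to a standard metric via \pref{std}, encode the metric by the torus-plus-foliation data of \tref{lens} and \tref{prism}, normalize the torus to the unit square, and then use the fact that the rational slope is a locally constant invariant to separate components in the lens space case (with infinitely many slopes realizing a fixed $L(p,q)$), while for $P(m,n)$ with $m>1$ the slope is pinned by the diffeomorphism type so the normalization connects all metrics. The only cosmetic difference is that you make explicit two points the paper leaves implicit, namely the total disconnectedness of $\Q$ and the infinitude of slopes $(q+kp)/p$ yielding the same lens space.
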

\begin{proof}
In \pref{std} we saw that we can deform any \ggm metric into one which is
standard. According to \tref{lens}, the standard \ggm metric on a lens
space can equivalently be uniquely defined by the triple
$(T^2,\Fol_1,\Fol_2)$. Thus, we can deform the flat metric on the torus,
carrying along the foliations $\Fol_i$, which induces a deformation of
the original metric by standard metrics. In the proof of \pref{deterbo}
we saw that, after choosing orientations, for $\gamma_i\in\Fol_i$ with
$v_i=\gamma_i'(0)$ we have the normalized markings
$\M(\gamma_i)=\{v_i,\hat v_i\}$ which represents a fundamental domain of
the lattice defined by $T^2$. We can thus deform the flat torus to a unit
square torus such that the first marking is given by $v_1=(1,0),\ \hat
v_1=(0,1)$. Then $v_2=(q,p)=qv_1+p\hat v_1$, which in turn determines
$\hat v_2$, and $q/p$ is the slope of $\Fol_2$ with respect to $\Fol_1$.
Metrics with different slope can clearly not be deformed into each other
since the invariant is a rational number. Since the diffeomorphism type
of the lens space only depends on $\pm q^{\pm 1}$ mod $p$, we obtain
infinitely many components.

For a prism manifold, we similarly deform the metric to be standard and
the rectangular torus into a unit square. But then the absolute value of
its slope already uniquely determines its diffeomorphism type.
\end{proof}

\begin{rems*} $a)$ For a lens space $L(p,q)=\Sph^3/\Z_p$ one can assume
that $p,q>0$, \mbox{$\gcd(p,q)=1$} and $q\leq p$ since the action of
$\Z_p$ is determined by $q$ mod $p$. Then the slopes $q'/p+n$ for
$n\in \N\cup\{0\}$, and $q'=\pm q^{\pm 1}\!\! \mod p$ with $0<q'\leq p$,
parametrize the infinitely many distinct connected components of \ggm
metrics of nonnegative curvature in $L(p,q)$.
Yet, the lens space $L(4p,2p-1)$ has one further component since it is
diffeomorphic to $P(1,p)$. This component is
distinct from the others since the core is one sided.

\smallskip

$b)$
One easily sees that the angle $\alpha$ between the nullity foliations of
a lens space, i.e., the angle between $v_1$ and $v_2$, is given by
$\cos(\alpha)=(q+p\theta_1)r_1/r_2=(b-p\theta_2)r_2/r_1$,
where $r_i=|v_i|$ and $\theta_i$ are the twists of the two cylinders. One
can thus make the nullity leaves orthogonal if and only if $0\leq -q/p<1$
and in
that case $r_2=ph_1,\ h_2=r_1/p$ and $\theta_1=-q/p,\ \theta_2=b/p$. This
determines the metric on the lens space described in the introduction as
a quotient of Figure 1, and is thus the only component containing a
metric with orthogonal nullity leaves.

\smallskip

$c)$ We can explicitly describe the \ggm metrics on $\Sph^3=L(1,1)$ up to
deformation. We assume that the core is a unit square and that the first
foliation is parallel to $(1,0)$, i.e. the first cylinder is a product
cylinder. Then the second marking is given by $v_2=(q,1), \ \hat
v_2=(q-1,1)$. By choosing the orientations appropriately, we can assume
$q\ge 0$. According to the proof of \pref{std}, the marking
$\{v,\hat v\}$ corresponds to a twisted cylinder as in \eqref{g} with
$r=\|v\|$, $\theta = \la v,\hat v\ra/\|v\|^2$ and
$h=\|\hat v-\theta v\|$. Thus in our case the second cylinder is given by
$r=1/h=\sqrt{1+q^2}$, and $\theta=(1+q^2-q)(1+q^2)$. The slope is $q$,
and the standard metric in Figure 1 corresponds to $q=0$.

\end{rems*}

\bigskip

\end{document}